\newtheorem{theo}{Th\'eor\`eme}
\newtheorem{cor}{Corollaire}
\newtheorem{lem}{Lemme}
\newtheorem{prop}{Proposition}
\theoremstyle{definition}
\newtheorem{defn}{D\'efinition}
\theoremstyle{remark}
\newtheorem{rem}{\bf Remarque\/}
\newtheorem{rems}[rem]{\bf Remarques\/}
\numberwithin{equation}{section}
\def\C{{\mathbb{C}}}
\def\P{{\mathbb{P}}}
\def\N{{\Bbb N}}
\def\1{{\mathchoice {\rm 1\mskip-4mu l} {\rm 1\mskip-4mu l}
{\rm 1\mskip-4.5mu l} {\rm 1\mskip-5mu l}}}            
\newcommand{\ds}       {\displaystyle}
\newcommand{\w}{\wedge}
\newcommand{\rt}{\longrightarrow}
\newcommand{\mo}{\smallsetminus}
\title{Ordres des courants positifs pluriharmoniques}
\author[K. Dabbek]{khalifa Dabbek}
\email{khalifa.dabbek@fsg.rnu.tn}
\author[N. Ghiloufi]{Noureddine Ghiloufi}
\email{noureddine.ghiloufi@fsg.rnu.tn}
\address{D\'epartement de Math\'ematiques \\ Facult\'e des sciences de Gab\`es \\ Universit\'e de Gab\`es \\ 6033 Gab\`es Tunisie.}
\subjclass[2000]{Primary 32U40; Secondary 32U05, 14A20}   
\keywords{Courant positif pluriharmonique, fonction plurisousharmonique, ordre et type}
\begin{document}
\maketitle
\hrule
\vskip.5cm
\begin{abstract}
    Dans cet article, nous \'etudions l'ordre (d'alg\'ebricit\'e) d'un courant positif pluriharmonique et nous le comparons soit avec l'ordre de ses tranches concourantes soit avec ses ordres directionnels. Des estimations de croissance de la fonction de \textsc{Lelong} sont \'etablies dont le probl\`eme d'alg\'ebricit\'e du courant est trait\'e comme cons\'equence.\\

    \textbf{Orders of positive pluriharmonic currents}\\
    \textsc{Abstract.} In this article, we study the order of a positive pluriharmonic current and we compare it with either the order of the concurrent slices or the directionnel orders of the current. Therefore some estimates of the growth of the \textsc{Lelong} function are established and the problem of algebraicity of the current is treated as a result.
\end{abstract}
\vskip.5cm
\hrule

\section{Pr\'eliminaires}
    Dans tout ce travail on utilise les notations suivantes: Pour $r>0$, $\mathbb{B}(r)=\mathbb{B}_n(r):=\{z\in\C^n;\ |z|<r\}$  la boule euclidienne de $\C^n$ de centre 0 et de rayon $r$ et pour tous $0<r_1<r_2$, $\mathbb{B}(r_1,r_2):=\{z\in\C^n;\ r_1\leq |z|<r_2\}=\mathbb{B}(r_2)\mo\mathbb{B}(r_1)$ ainsi que les op\'erateurs $$\partial=\sum_{j=1}^n{\partial \over\partial z_j}\ dz_j, \quad \overline{\partial} =\sum_{j=1}^n{\partial \over\partial \overline{z}_j}\ d\overline{z}_j,\quad d=\partial+\overline{\partial}\quad\hbox{et}\ d^c={i\over 4\pi}(\overline{\partial}-\partial).$$
    Notons ${\mathscr D}_{p,q}(\C^n)$ l'espace des formes diff\'erentielles de classe ${\mathscr C}^\infty$ \`a supports compacts de bidegr\'e $(p,q)$ dans $\C^n$. L'espace des courants de bidimension $(p,q)$ (ou de bidegr\'e $(n-p,n-q)$), not\'e ${\mathscr D}_{p,q}'(\C^n)$, est par d\'efinition le dual de ${\mathscr D}_{p,q}(\Omega)$ muni de sa topologie usuelle.\\
    Soit $T\in {\mathscr D}'_{p,p}(\C^n)$; on dit que $T$ est positif si $T\w i\alpha_1\w\overline{\alpha}_1 \w ...\w i\alpha_p\w\overline{\alpha}_p$ est une mesure positive pour toutes $\alpha_1,...,\alpha_p\in {\mathscr D}_{1,0}(\C^n)$. Le courant $T$ est dit ferm\'e si $\langle dT,\ \phi\rangle:=-\langle T,\ d\phi\rangle=0$ pour tout $\phi\in {\mathscr D}_{p-1,p}(\C^n)$, il est dit plurisousharmonique (psh)(resp. pluriharmonique (ph)) si $dd^cT$ est un courant positif (resp. $dd^cT=0$) o\`u $\langle dd^cT,\ \phi\rangle:=\langle T,\ dd^c\phi\rangle$ pour tout $\phi\in {\mathscr D}_{p-1,p-1}(\C^n)$.\\
    On associe \`a un courant positif  $T$ de bidimension $(p,p)$ sur $\C^n$, la fonction de \textsc{Lelong} d\'efinie par $\nu_T(r)=\frac1{r^{2p}}\int_{\mathbb{B}(r)}T\w(dd^c |z|^2)^p$. Si $T$ est positif plurisousharmonique alors $\nu_T$ est croissante sur $]0,+\infty[$. Un courant positif $T$ est dit alg\'ebrique si la fonction $\nu_T$ est born\'ee (Pour $T=[X]$, le courant d'int\'egration sur un ensemble analytique $X$, l'alg\'ebricit\'e de $T$ est \'equivalente \`a l'alg\'ebricit\'e classique de $X$).\\

    Soient $k\leq p<n$ et la projection canonique $\pi:\C^n\rt \C^k$ d\'efinie par $\pi(z',z'')=z'$.
    Soit $h$ une fonction positive bor\'elienne born\'ee \`a support compact dans la boule unit\'e de $\C^k$ telle que $\int_{\C^k}h(z')d\lambda_k(z')=1$. Pour $\epsilon>0$, on pose $h_\epsilon(z')=\epsilon^{-2k}h(z'/\epsilon)$.  Soient $T\in{\mathscr D}'_{p,p}(\C^n)$ un courant positif et $a\in \C^k$, la tranche (parall\`ele) de $T$ par $h$ au point $a$ not\'e $\langle T,\pi,a\rangle_h$, est la limite faible dans ${\mathscr D}'_{p-k,p-k}(\C^n)$, quand elle existe, de $T\w\pi^*(h_\epsilon(z'-a).(dd^c|z'|^2)^k)$ quand $\epsilon\rt 0$. Si $T$ est un courant positif pluriharmonique, D'apr\`es \cite{Da-Elk-El}, il existe un ensemble de mesure de \textsc{Lebesgue} nulle  de $\C^k$ en dehors duquel la tranche de $T$ existe et est ind\'ependante de $h$.\\

    Dans la suite soient $p,\ q<n$ des entiers tels que $p+q\geq n$ et $G_{q,n}$ la Grassmannienne des sous-espaces vectoriels de dimension $q$ dans $\C^n$ munie de sa forme K\"ahlerienne standard $\omega_q$ et  la m\'etrique de \textsc{Fubini-Study} associ\'ee not\'ee $\mu_q$ (ou simplement $\mu$ s'il n'y a pas d'ambigu\"{\i}t\'e). Soit $X_{q,n}=\{(z,L)\in\C^n\times G_{q,n};\ z\in L\}$ le fibr\'e vectoriel de rang $q$ au dessus de $G_{q,n}$, muni des projections canoniques $\pi:\ X_{q,n}\rt G_{q,n}$ et $\sigma:\ X_{q,n}\rt \C^n$; alors $\beta_{X_{q,n}}:=\pi^*\omega_q+\sigma^*\beta$ d\'efinie une forme K\"ahlerienne sur la vari\'et\'e $X_{q,n}$ qui est de dimension $dim\: X_{q,n}=n+(q-1)(n-q)$. La restriction $\sigma_0$ de $\sigma$ \`a $X_{q,n}':=\sigma^{-1}(\C^n\mo\{0\})$ est une submersion sur $\C^n\mo\{0\}$. Soit $T$ un courant positif pluriharmonique sur $\C^n$, alors $\sigma_0^*T$ d\'efinit un courant positif pluriharmonique sur $X_{q,n}'$ de masse localement finie au voisinage de $\sigma^{-1}(0)$ (cf. \cite{BM-El}), et d'apr\`es \textsc{Dabbek-Elkhadhra-El Mir} \cite{Da-Elk-El}, l'extension trivial de $\sigma_0^*T$ par z\'ero au dessus de $\sigma^{-1}(0)$ est un courant positif de $dd^c-$n\'egatif de dimension $p+(q-1)(n-q)$ sur $X_{q,n}$, qu'on notera $\widetilde{\sigma_0^*T}$. On pose alors\\
        $\bullet\ \sigma^*T=\widetilde{\sigma_0^*T}$ si $q>n-p$, o\`u $n-p$ est le degr\'e de $\widetilde{\sigma_0^*T}$.\\
        $\bullet\ \sigma^*T=\widetilde{\sigma_0^*T}+\nu(T,0)[\sigma^{-1}(0)]$ si $q=n-p$, o\`u $\nu(T,0)$ est le nombre de \textsc{Lelong} de $T$ en 0.\\
    Comme pr\'ec\'edemment, il existe un ensemble $E_T$ de la grassmannienne $G_{q,n}$ de mesure de \textsc{Fubini-Study} nulle tel que la tranche (concourante) $\langle\ \sigma^*T,\pi,L\ \rangle $ de $T$, qu'on notera ici $T_{|L}$, existe pour tout $L\in \Omega_T:=G_{q,n}\mo E_T$. Quand $T$ est \`a coefficients continus, cette tranche coincide avec la restriction usuelle de $T$ \`a $L$. De plus en dehors d'un ensemble n\'egligeable de $\Omega_T$, $T_{|L}$ est un courant pluriharmonique; on peut donc supposer dans toute la suite que $T_{|L}$ est bien d\'efini et est pluriharmonique sur $\Omega_T$.\\
    On note par $$\nu_{T_{|L}}(r):=\frac{1}{r^{2(p+q-n)}}\int_{X_{q,n}(r)}T_{|L}\w\sigma^*(dd^c|z|^2)^{p+q-n}$$ la fonction de \textsc{Lelong} de $T_{|L}$ avec $X_{q,n}(r)=\sigma^{-1}(\mathbb{B}(r))$.

    \begin{defn}
        Un courant positif est dit \textit{d'ordre d'alg\'ebricit\'e} $\varrho$ fini si la limite suivante est finie
        $$\varrho:=\limsup_{r\to +\infty}\frac{\log\nu_T(r)}{\log r}<+\infty.$$
    \end{defn}
    Dans toute la suite  on utilise seulement ordre pour indiquer l'ordre d'alg\'e\-bricit\'e. A noter que l'ordre du courant d'int\'egration sur un ensemble analytique $X$, est \'egale \`a l'ordre classique de l'ensemble analytique $X$ (cf. \cite{Ch}).
\section{Ordres des tranches concourantes}
\subsection{Th\'eor\`eme principal et cons\'equences}
    Le r\'esultat principal de cette partie (th\'eor\`eme \ref{th1}) consiste \`a contr\^oler la fonction $\nu_T$ du courant $T$ par celles de ses tranches sur la grassmannienne.

    \begin{theo}\label{th1}
        Soient $T$ un courant positif pluriharmonique  de bidimension $(p,p)$ sur $\C^n$ et $E$ un ensemble borelien de $\Omega_T$ de mesure de \textsc{Fubini-Study} $\mu(E)$ non nulle. Alors il existe deux constantes $c_1,\ c_2>0$ qui d\'ependent de $E$ telles que l'on ait
        $$c_1 \nu_T(c_2 r)\leq \int_E \nu_{T_{|L}}(r)d\mu(L)$$
        pour tout $r$ suffisamment grand.
    \end{theo}
    \begin{cor}\label{cor1}
        Soit $T$ un courant positif pluriharmonique de bidimension $(p,p)$ sur $\C^n$. On suppose qu'il existe un ensemble $E$ de mesure  non nulle de la grassmannienne $G_{q,n}$ tel que $T_{|L}$ soit un courant alg\'ebrique pour tout $L\in E$, alors $T$ est alg\'ebrique.
    \end{cor}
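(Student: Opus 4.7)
L'idée serait de déduire ce corollaire du Théorème \ref{th1} par un découpage standard de l'ensemble $E$ en parties sur lesquelles la borne est uniforme.

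Quitte à retirer un ensemble $\mu$-négligeable, on peut supposer $E\subset\Omega_T$. Par hypothèse, pour chaque $L\in E$, le courant $T_{|L}$ est algébrique, donc $\nu_{T_{|L}}$ est majorée; comme elle est en outre croissante, la quantité $M(L):=\lim_{r\to+\infty}\nu_{T_{|L}}(r)$ est finie. Je décomposerais alors $E$ en la réunion croissante des ensembles
$$E_k:=\{L\in E;\ M(L)\leq k\},\quad k\in\N^*,$$
qui sont mesurables: la fonction $L\mapsto\nu_{T_{|L}}(r)$ est mesurable à $r$ fixé d'après la construction des tranches concourantes, et $M$ s'obtient comme supremum croissant d'une telle famille, indexée par $r$ rationnel. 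Comme $\mu(E)>0$, la $\sigma$-additivité fournit un entier $k_0$ pour lequel $\mu(E_{k_0})>0$.

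Sur $E_{k_0}$, on dispose uniformément de la majoration $\nu_{T_{|L}}(r)\leq k_0$ pour tout $r>0$ et tout $L\in E_{k_0}$. L'application du Théorème \ref{th1} à l'ensemble $E_{k_0}$ fournit alors deux constantes $c_1,c_2>0$ telles que, pour tout $r$ suffisamment grand,
$$c_1\,\nu_T(c_2 r)\leq\int_{E_{k_0}}\nu_{T_{|L}}(r)\,d\mu(L)\leq k_0\,\mu(E_{k_0}).$$
La fonction (croissante) $\nu_T$ étant ainsi majorée, $T$ est algébrique, ce qui termine la preuve.

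Le seul point susceptible de demander un soin technique est la justification de la mesurabilité de $L\mapsto\nu_{T_{|L}}(r)$, mais elle découle de la définition même des tranches concourantes comme limites faibles de courants dépendant mesurablement de $L\in G_{q,n}$, et reste essentiellement routinière. Le c{\oe}ur de l'argument est l'exhaustion qui ramène une hypothèse ponctuelle (algébricité de chaque $T_{|L}$) à une borne uniforme valable sur un sous-ensemble de mesure strictement positive, condition indispensable pour invoquer le Théorème \ref{th1}.
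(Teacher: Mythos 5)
Votre preuve est correcte et suit essentiellement la même démarche que celle de l'article : la décomposition $E=\bigcup_k E_k$ (avec $E_k$ identique, via la croissance de $\nu_{T_{|L}}$, à l'ensemble $\mathscr B_k$ du texte), l'extraction d'un $E_{k_0}$ de mesure strictement positive, puis l'application du Théorème \ref{th1} pour borner $\nu_T$. Votre remarque sur la mesurabilité de $L\mapsto\nu_{T_{|L}}(r)$ est un soin supplémentaire bienvenu que l'article passe sous silence, mais elle ne change pas la nature de l'argument.
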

    Notons que ces deux r\'esultats sont d\'emontr\'es par les m\^emes auteurs dans \cite{Gh-Da} dans le cas d'un courant positif ferm\'e.\\

    \begin{proof}
        Pour tout $N\in\N^*$, on pose $\mathscr B_N:=\{L\in E;\ \nu_{T_{|L}}(r)\leq N,\ \forall\;r>0\}$. Comme $\mu(E)>0$  et, par hypoth\`ese, $\bigcup_{N}\mathscr B_N=E$ donc il existe $N_0>0$ tel que $\mu(\mathscr B_{N_0})>0$. D'apr\`es le th\'eor\`eme \ref{th1}, il existe deux constantes $c_1,\ c_2>0$ telles que
        $$\nu_T(r)\leq \frac1{c_1}\int_{\mathscr B_{N_0}}\nu_{T_{|L}}\left(\frac r{c_2} \right)d\mu(L)\leq \frac1{c_1}N_0\mu(\mathscr B_{N_0}).$$
    \end{proof}

    \begin{cor}\label{cor2}
        Soit $T$ un courant positif pluriharmonique de bidimension $(p,p)$ sur $\C^n$. Soit $(r_m)_m$ une suite  de r\'eels positifs croissante vers $+\infty$. Alors l'ensemble
        $$ E:=\left\{L\in \Omega_T;\ \lim_{m\to +\infty}\frac{\nu_{T_{|L}}(\alpha r_m)}{\nu_T(r_m)}=0,\ \hbox{pour tout }\alpha>0\right\}$$ est de mesure nulle.
    \end{cor}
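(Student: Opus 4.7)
Mon plan est de raisonner par l'absurde en supposant $\mu(E)>0$, d'appliquer le th\'eor\`eme \ref{th1} \`a un sous-ensemble bien choisi de $E$ pour obtenir une minoration de $\nu_T$, puis de confronter cette minoration \`a la convergence ponctuelle fournie par la d\'efinition de $E$. L'obstacle principal tient au fait que les constantes $c_1,c_2$ du th\'eor\`eme \ref{th1} d\'ependent du sous-ensemble auquel on l'applique, alors que la d\'efinition de $E$ ne fournit qu'une convergence ponctuelle pour chaque $\alpha>0$ fix\'e s\'epar\'ement : il y a une d\'ependance circulaire entre le choix de $\alpha=1/c_2$ et celui des constantes.

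Pour lever cette difficult\'e, j'invoquerai le th\'eor\`eme d'Egorov de fa\c con it\'er\'ee. Pour chaque entier $k\geq 1$, la suite de fonctions mesurables $L\mapsto \nu_{T_{|L}}(kr_m)/\nu_T(r_m)$ converge ponctuellement vers z\'ero sur $E$ (ensemble de mesure finie dans $G_{q,n}$), donc Egorov fournit un sous-ensemble mesurable $F_k\subset E$ avec $\mu(E\mo F_k)<\mu(E)/2^{k+1}$ sur lequel la convergence est uniforme. En posant $F:=\bigcap_{k\geq 1}F_k$, on a $\mu(F)\geq \mu(E)/2>0$, et sur $F$ la convergence est uniforme simultan\'ement pour tous les entiers $\alpha=k\geq 1$.

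On applique alors le th\'eor\`eme \ref{th1} au sous-ensemble $F$, qui fournit $c_1,c_2>0$ tels que $c_1\nu_T(c_2 r)\leq \int_F \nu_{T_{|L}}(r)\, d\mu(L)$ pour tout $r$ assez grand. En substituant $r=r_m/c_2$ et en divisant par $\nu_T(r_m)>0$ (on peut supposer $T\not\equiv 0$, sinon l'\'enonc\'e est trivial), il vient
$$c_1 \leq \int_F \frac{\nu_{T_{|L}}(r_m/c_2)}{\nu_T(r_m)}\, d\mu(L).$$
En fixant un entier $k\geq 1/c_2$, la croissance de $\nu_{T_{|L}}$ (cons\'equence de la positivit\'e plurisousharmonique de $T_{|L}$) donne la majoration $\nu_{T_{|L}}(r_m/c_2)\leq \nu_{T_{|L}}(kr_m)$, d'o\`u
$$c_1 \leq \mu(F)\, \sup_{L\in F}\frac{\nu_{T_{|L}}(kr_m)}{\nu_T(r_m)}.$$
Le membre de droite tend vers z\'ero quand $m\to +\infty$ par convergence uniforme sur $F$ pour $\alpha=k$, ce qui contredit $c_1>0$ et ach\`eve la preuve.
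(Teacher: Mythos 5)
Votre preuve est correcte et suit essentiellement la m\^eme strat\'egie que celle du papier : raisonnement par l'absurde, application it\'er\'ee du th\'eor\`eme d'Egorov sur les param\`etres entiers pour extraire un sous-ensemble de mesure positive \`a convergence uniforme, puis contradiction via le th\'eor\`eme \ref{th1}. Vous explicitez m\^eme un peu plus soigneusement que le papier le passage de $r_m/c_2$ \`a un entier $k\geq 1/c_2$ via la croissance de $\nu_{T_{|L}}$, ce qui l\`eve proprement la d\'ependance circulaire entre $c_2$ et le choix de $\alpha$.
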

    Ce corollaire est d\'emontr\'e par \textsc{Amamou-Ben Farah} \cite{Am-BF} dans le cas des courants positifs ferm\'es et $q=1$, ce qui correspond \`a l'espace projectif $\mathbb{P}^{n-1}$.

    \begin{proof}
        Supposons que $\mu(E)>0$. Pour $s\in\N$, on consid\`ere
        $$ E_s:=\left\{L\in \Omega_T;\ \lim_{m\to +\infty}\frac{\nu_{T_{|L}}(s r_m)}{\nu_T(r_m)}=0\right\}.$$ On a $E=\cup_s E_s$ et d'apr\`es le th\'eor\`eme d'\textsc{Egorov}, pour tout $s\in\N$ il existe $K_s\subset E_s$ de mesure $\mu(K_s)\leq \frac{\mu(E)}{2^{s+2}}$ tel que la suite $\left(\frac{\nu_{T_{|L}}(s r_m)}{\nu_T(r_m)}\right)_m$ converge uniform\'ement vers 0 sur $E_s\mo K_s$. Si on note par  $W=E\mo \cup_sK_s$, alors $\mu(W)\geq \mu(E)/2$ et la suite $\left(\frac{\nu_{T_{|L}}(\alpha r_m)}{\nu_T(r_m)}\right)_m$ converge uniform\'ement vers 0 sur $W$ ($\forall\;\alpha$) ce qui donne
        $$\lim_{m\to +\infty} \int_W \frac{\nu_{T_{|L}}(\alpha r_m)}{\nu_T(r_m)}d\mu(L)=0,\quad \forall\; \alpha>0.$$ Comme $\mu(W)>0$, le th\'eor\`eme \ref{th1} implique l'existence de deux constantes $c_1,\ c_2>0$ tels que
        $$ c_1\leq \int_W \frac{\nu_{T_{|L}}(\frac1{c_2} r_m)}{ \nu_T(r_m)}d\mu(L)$$ pour $m$ suffisamment grand  ce qui est en contradiction avec la limite est nulle quand $m$ tend vers l'infinie.
    \end{proof}
    \begin{cor}
        Soit $T$ un courant positif pluriharmonique de bidimension $(p,p)$ sur $\C^n$. S'il existe un ensemble $E\subset\Omega_T$ de mesure non nulle de la grassmannienne $G_{q,n}$ tel que $T_{|L}$ soit nul pour tout $L\in E$ alors $T$ est nul.
    \end{cor}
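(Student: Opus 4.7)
The plan is to apply Theorem~\ref{th1} directly and then propagate the vanishing from the Lelong function back to the current itself. Since by hypothesis $T_{|L}=0$ for every $L\in E$, one has $\nu_{T_{|L}}(r)=0$ for all $r>0$ and all $L\in E$. Because $\mu(E)>0$, Theorem~\ref{th1} supplies constants $c_1,c_2>0$ such that
$$c_1\,\nu_T(c_2 r)\ \leq\ \int_E \nu_{T_{|L}}(r)\,d\mu(L)\ =\ 0$$
for every $r$ sufficiently large, so $\nu_T(c_2 r)=0$ for all such $r$.

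Next I would invoke the monotonicity of $\nu_T$ recalled in Section~1: since $T$ is pluriharmonic one has $dd^cT=0$, so $T$ is in particular positive plurisubharmonic and $\nu_T$ is non-decreasing on $]0,+\infty[$. A non-negative, non-decreasing function which vanishes for arbitrarily large values of its argument must be identically zero, hence $\nu_T\equiv 0$ on $]0,+\infty[$.

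It remains to deduce $T=0$ from $\nu_T\equiv 0$. This identity means that $\int_{\mathbb{B}(r)}T\w(dd^c|z|^2)^p=0$ for every $r>0$. As $T$ is positive, $T\w(dd^c|z|^2)^p$ is a positive Radon measure on $\C^n$ whose integral over every centered ball vanishes, so it is the zero measure. Since the trace measure of a positive current vanishes if and only if the current itself vanishes, we conclude $T=0$. No genuine obstacle arises here: the statement is essentially a one-line reading of Theorem~\ref{th1}, provided one invokes the standard principle that the Lelong function detects the triviality of a positive current.
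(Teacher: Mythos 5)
Your proposal is correct and follows exactly the paper's argument: apply Theorem~\ref{th1} to the set $E$ of positive measure, observe that the right-hand side vanishes since $\nu_{T_{|L}}\equiv 0$ for $L\in E$, and conclude $\nu_T(r)=0$ for $r$ large, hence $T=0$. The only difference is that you spell out the final step (monotonicity of $\nu_T$ for the psh current $T$ and the fact that a positive current with vanishing trace measure is zero), which the paper leaves implicit.
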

    \begin{proof}
        Comme $\mu(E)>0$, d'apr\`es le th\'eor\`eme \ref{th1}, il existe $c_1,\ c_2>0$ tels que pour tout $r$ suffisamment grand on a
        $$\nu_T(r)\leq \frac1{c_1}\int_E \nu_{T_{|L}}\left(\frac r{c_2}\right)d\mu(L)=0.$$
        Ce qui donne $T=0$.
    \end{proof}
\subsection{Preuve du th\'eor\`eme \ref{th1}}
    la d\'emonstration du th\'eor\`eme \ref{th1} se fait par r\'ecurrence sur l'entier $q$ o\`u on utilise les lemmes \ref{lem1} et \ref{lem2} qui suivent.
    \begin{lem}\label{lem1}(formule de type \textsc{Crofton})
        Soit $S$ un courant positif pluriharmonique de bidimension $(p,p)$ sur $\C^n$. Alors pour tout $r>0$ on a $$\nu_S(r)=\int_{G_{q,n}}\nu_{S_{|L}}(r)d\mu(L).$$
    \end{lem}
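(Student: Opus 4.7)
Cette identit\'e est une formule de type Crofton, que je d\'emontrerais par un calcul de Fubini sur la vari\'et\'e d'incidence $X_{q,n}$, en utilisant les deux projections $\pi:X_{q,n}\to G_{q,n}$ et $\sigma:X_{q,n}\to\C^n$.

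En premier lieu, par d\'efinition de la tranche concourante $S_{|L}=\langle\sigma^*S,\pi,L\rangle$, le membre de droite se r\'e\'ecrit
$$\int_{G_{q,n}}\nu_{S_{|L}}(r)\,d\mu(L)=\frac{1}{r^{2(p+q-n)}}\int_{G_{q,n}}\langle\sigma^*S,\pi,L\rangle\bigl(\1_{X_{q,n}(r)}\sigma^*(dd^c|z|^2)^{p+q-n}\bigr)d\mu(L).$$
J'appliquerais ensuite la formule de tranchage associ\'ee \`a la submersion $\pi$~: pour tout courant $T$ sur $X_{q,n}$ de bidegr\'e appropri\'e et toute forme test lisse $\psi$, on a
$$\int_{G_{q,n}}\langle T,\pi,L\rangle(\psi)\,d\mu(L)=\langle T,\psi\w\pi^*\omega_q^{\dim G_{q,n}}\rangle$$
(\`a la constante de normalisation de $d\mu$ pr\`es). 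En prenant $T=\sigma^*S$ et $\psi=\1_{X_{q,n}(r)}\sigma^*(dd^c|z|^2)^{p+q-n}$, l'indicatrice \'etant r\'egularis\'ee par un proc\'ed\'e standard gr\^ace \`a la masse localement finie de $\sigma^*S$ \'etablie dans \cite{BM-El,Da-Elk-El}, on transforme la somme d'int\'egrales de tranches en une unique int\'egrale sur $X_{q,n}(r)$.

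On pousse alors le r\'esultat sur $\C^n$ via $\sigma$ et l'on invoque la formule de Crofton classique pour la grassmannienne, qui identifie $\sigma_*(\pi^*\omega_q^{\dim G_{q,n}})$ \`a un multiple explicite de $(dd^c\log|z|^2)^{n-q}$ sur $\C^n\mo\{0\}$. Par la formule de projection,
$$\int_{G_{q,n}}\nu_{S_{|L}}(r)\,d\mu(L)=\frac{c_{q,n}}{r^{2(p+q-n)}}\int_{\mathbb{B}(r)}S\w(dd^c|z|^2)^{p+q-n}\w(dd^c\log|z|^2)^{n-q}.$$
Un argument de type Lelong--Jensen (int\'egration par parties radiale utilisant la positivit\'e et $dd^cS=0$) permet alors d'identifier le membre de droite \`a $\frac{1}{r^{2p}}\int_{\mathbb{B}(r)}S\w(dd^c|z|^2)^p=\nu_S(r)$, la constante $c_{q,n}$ \'etant fix\'ee en testant l'identit\'e sur $S=[L_0]$ pour $L_0\in G_{q,n}$ un sous-espace lin\'eaire, cas o\`u les deux membres valent $1$.

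La principale difficult\'e technique concerne la fibre singuli\`ere $\sigma^{-1}(0)$~: l'extension triviale $\widetilde{\sigma_0^*S}$ n'est que $dd^c$-n\'egative et, dans la dimension critique $q=n-p$, il faut ajouter le terme de Dirac $\nu(S,0)[\sigma^{-1}(0)]$ pour former $\sigma^*S$. V\'erifier que le calcul de Crofton, men\'e sur $\widetilde{\sigma_0^*S}$ seule, produit exactement le d\'eficit compens\'e par cette correction \`a l'origine, et justifier le passage \`a la limite dans la r\'egularisation de l'indicatrice sans disposer de la fermeture de $\sigma^*S$, constituent le coeur d\'elicat de l'argument.
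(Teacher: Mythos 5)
Votre sch\'ema repose exactement sur les trois ingr\'edients de la preuve de l'article --- formule de tranchage pour $\pi$, identit\'e de \textsc{Siu} $(dd^c\log|z|^2)^{n-q}=\sigma_*\pi^*\omega_q^{q(n-q)}$ (apr\`es r\'eduction au cas $p+q=n$ en rempla\c{c}ant $S$ par $S\w\beta^{p+q-n}$), et formule de \textsc{Lelong-Jensen} --- simplement encha\^{\i}n\'es dans l'ordre inverse; les exposants sont coh\'erents et la normalisation de la constante par le test $S=[L_0]$ est l\'egitime. Ce n'est donc pas une route diff\'erente, c'est le m\^eme squelette.

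Le probl\`eme est que la difficult\'e que vous signalez en dernier paragraphe n'est pas un d\'etail p\'eriph\'erique mais le coeur analytique de la d\'emonstration, et votre esquisse ne la r\'esout pas. Pour un courant positif pluriharmonique g\'en\'eral, le produit $S\w(dd^c\log|z|^2)^{n-q}$ qui porte votre \'etape de \textsc{Lelong-Jensen} n'est pas d\'efini a priori (potentiel non born\'e, courant non ferm\'e), et l'\'evaluation de $\sigma^*S$ contre une indicatrice r\'egularis\'ee ne passe pas \`a la limite sans contr\^ole suppl\'ementaire. L'article l\`eve ces deux obstacles d'un coup en r\'egularisant le courant lui-m\^eme~: on pose $S_k=S*\chi_k$ avec un noyau radial, toutes les identit\'es (Lelong-Jensen sans terme parasite puisque $dd^cS_k=0$, identit\'e de \textsc{Siu}, \'egalit\'e des masses sur $\sigma^{-1}(\mathbb{B}(r))$) sont \'etablies pour les $S_k$ lisses, puis on passe \`a la limite gr\^ace \`a la borne de masse d'\textsc{Alessandrini-Bassanelli} sur $(\sigma^*S_k)_k$, qui assure la convergence faible $\sigma^*S_k\to\sigma^*S$, combin\'ee \`a un double encadrement \`a rayons $r_1<r_2<r$ exploitant la semi-continuit\'e des masses sur les ouverts et sur les compacts; on conclut en faisant $r_1\to r$ hors d'un ensemble au plus d\'enombrable de rayons. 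C'est cet argument de r\'egularisation et d'encadrement qu'il vous faut fournir pour que votre calcul formel devienne une preuve; en son absence, l'\'etape de passage \`a la limite et le traitement de la fibre $\sigma^{-1}(0)$ restent des affirmations non justifi\'ees.
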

    \begin{proof}
        Soit $\chi_k$ un noyau r\'egularisant qui ne d\'epend que de $|z|$ sur $\C^n$, on note par $S_k=S*\chi_k$ le r\'egularis\'e de $S$ qui est un courant positif pluriharmonique de classe $\mathcal C^\infty$ sur $\C^n$. D'apr\`es \textsc{Alessandrini-Bassanelli} \cite{Al-Ba}, la suite $(\sigma^*(S_k))_k$ est born\'ee en masse, alors quitte \`a extraire une sous suite, on peut supposer que $(\sigma^*S_k)_k$ converge faiblement sur $X_{q,n}$ vers $\sigma^*S$. D'apr\`es la formule de \textsc{Lelong-Jensen}, pour $0<r_1<r_2<r$,
        $$\frac{1}{r_2^{2p}}\int_{\mathbb{B}(r_2)}S_k\w (dd^c|z|^2)^p-\frac{1}{r_1^{2p}}\int_{\mathbb{B}(r_1)}S_k\w (dd^c|z|^2)^p=\int_{\mathbb{B}(r_1,r_2)}S_k\w (dd^c\log|z|^2)^p.$$
        Donc si $r_1\to 0^+$, $\ds\frac{1}{r_2^{2p}}\int_{\mathbb{B}(r_2)}S_k\w(dd^c|z|^2)^p = \int_{\mathbb{B}(r_2)\mo \{0\}}S_k\w (dd^c\log|z|^2)^p$.\\
        Quitte \`a remplacer $S$ par $S\w(dd^c|z|^2)^{p-q}$, on peut supposer que $p+q=n$, donc on peut appliquer l'\'egalit\'e (prouv\'e par \textsc{Siu} (\cite{Siu}, p128)):
        $$(dd^c\log|z|^2)^p=\sigma_*\pi^*\omega_{n-p}^{p(n-p)}$$ o\`u $\omega_{n-p}$ est la forme K\"ahlerienne canonique de $G_{n-p,n}$, et par suite $$\begin{array}{lll}
            \ds\frac{1}{r_2^{2p}}\int_{\mathbb{B}(r_2)}S_k\w (dd^c|z|^2)^p &=&\ds\int_{\mathbb{B}(r_2)\mo\{0\}}S_k\w (dd^c\log|z|^2)^p\\
            &=&\ds\int_{\mathbb{B}(r_2)\mo\{0\}}S_k\w\sigma_*\pi^*\omega_{n-p}^{p(n-p)}\\
            & = &\ds\int_{\sigma^{-1}(\mathbb{B}(r_2))}\sigma^*S_k\w\pi^*\omega_{n-p}^{p(n-p)}.
          \end{array}$$
        Or
        $$\begin{array}{lll}
            \ds\frac{1}{r_2^{2p}}\int_{\mathbb{B}(r_1)}S\w (dd^c|z|^2)^p&\leq&\ds\liminf_{k\to+\infty} \frac{1}{ r_2^{2p}}\int_{\mathbb{B}(r_2)}S_k\w (dd^c|z|^2)^p\\
            &\leq&\ds\limsup_{k\to+\infty}\int_{\sigma^{-1}(\mathbb{B}(r_2))}\sigma^*S_k\w\pi^*\omega_{n-p}^{p(n-p)}\\
            & \leq&\ds\int_{\sigma^{-1}(\mathbb{B}(r))}\sigma^*S\w\pi^*\omega_{n-p}^{p(n-p)}
        \end{array}$$
        et
        $$\begin{array}{lll}
            \ds\int_{\sigma^{-1}(\mathbb{B}(r_1))}\sigma^*S\w\pi^*\omega_{n-p}^{p(n-p)}&\leq&\ds\liminf_{k\to+\infty}\int_{\sigma^{-1}(\mathbb{B}(r_2))} \sigma^*S_k\w\pi^*\omega_{n-p}^{p(n-p)}\\
            &\leq&\ds\limsup_{k\to+\infty}\frac{1}{ r_2^{2p}}\int_{\mathbb{B}(r_2)}S_k\w(dd^c|z|^2)^p\\
            & \leq&\ds\frac{1}{r_2^{2p}}\int_{\mathbb{B}(r)}S\w (dd^c|z|^2)^p.
        \end{array}$$
        Si on tend $r_1\to r$ ($r$ en dehors d'un ensemble au plus d\'enombrable), on obtient
        $$\frac{1}{ r^{2p}}\int_{\mathbb{B}(r)}S\w(dd^c|z|^2)^p=\int_{\sigma^{-1}(\mathbb{B}(r))}\sigma^*S\w\pi^*\omega_{n-p}^{p(n-p)}$$
        Et d'apr\`es la formule de tranchage on a
        $$\int_{\sigma^{-1}(\mathbb{B}(r))}\sigma^*S\w\pi^*\omega_{n-p}^{p(n-p)}=\int_{L\in G_{n-p,n}}\left(\int_{L\cap \mathbb{B}(r)}S_{|L}\right)\omega_{n-p}^{p(n-p)}.$$ D'o\`u $\ds\frac{1}{r^{2p}}\int_{\mathbb{B}(r)}S\w (dd^c|z|^2)^p=\int_{L\in G_{n-p,n}}\left(\int_{L\cap \mathbb{B}(r)}S_{|L}\right)\omega_{n-p}^{p(n-p)}.$
    \end{proof}


    \begin{rem}
        Dans le cas des courants positifs ferm\'es, la formule de \textsc{Crofton} est d\'emontr\'e par \textsc{Siu} \cite{Siu} en 1974. Une question naturelle se pose: A-t-on une formule pareille pour les courants positifs plurisousharmoniques?
    \end{rem}

    \begin{lem}\label{lem2} \cite{Da-Elk-El}
        Soit $S$ un courant positif pluriharmonique de bidimension $(p,p)$  sur un ouvert $\mathcal O$ de $\C^n$. Soit $f$ une fonction psh , $f\geq -1$ de classe $\mathcal C^2$ sur $\mathcal O$ telle que $\mathcal O'=\{z\in \mathcal O;\ f(z)<0\}$ soit relativement compact dans $\mathcal O$. Si $K$ un compact de $\mathcal O'$, on pose $c_K=-\sup_{z\in K}f(z)$.\\
        Alors pour tout entier $1\leq s\leq p$ et pour toute fonction $g$ psh  de classe $\mathcal C^2$ sur $\mathcal O'$ v\'erifiant $-1\leq g<0$ on a: $$\int_K S\w(dd^cg)^p\leq c_K^{-s}\int_{\mathcal O'}S\w(dd^cf)^s\w(dd^cg)^{p-s}.$$
    \end{lem}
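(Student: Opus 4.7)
L'argument se d\'ecompose naturellement en deux \'etapes: une r\'eduction par troncature, suivie d'une comparaison monotone obtenue par t\'elescopage et int\'egrations par parties it\'er\'ees qui exploitent la condition $dd^cS=0$.

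\emph{Troncature.} Sur $K$, on a $-f\geq c_K$, donc $(-f/c_K)^s\geq 1$. La positivit\'e du courant $S\wedge(dd^cg)^p$ donne imm\'ediatement
$$\int_K S\wedge(dd^cg)^p\leq c_K^{-s}\int_{\mathcal O'}(-f)^s\,S\wedge(dd^cg)^p,$$
ce qui ram\`ene le probl\`eme \`a l'in\'egalit\'e $\int (-f)^s S\wedge(dd^cg)^p\leq \int S\wedge(dd^cf)^s\wedge(dd^cg)^{p-s}$.

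\emph{T\'elescopage et int\'egration par parties.} Pour $0\leq k\leq s$, posons
$$A_k:=\int_{\mathcal O'}(-f)^{s-k}\,S\wedge(dd^cf)^k\wedge(dd^cg)^{p-k},$$
de sorte que l'in\'egalit\'e pr\'ec\'edente s'\'ecrit $A_0\leq A_s$; il suffit donc de prouver $A_k\leq A_{k+1}$ pour chaque $k\in\{0,\dots,s-1\}$. En posant $R:=S\wedge(dd^cf)^k\wedge(dd^cg)^{p-k-1}$, on obtient un courant positif pluriharmonique de bidimension $(1,1)$ (car $dd^cS=0$ et les formes $dd^cf$, $dd^cg$ sont ferm\'ees), et l'in\'egalit\'e cherch\'ee, avec $a:=s-k\geq 1$, devient
$$\int_{\mathcal O'}(-f)^a R\wedge dd^cg\leq \int_{\mathcal O'}(-f)^{a-1}R\wedge dd^cf.$$
Le coeur de la preuve consiste alors \`a appliquer une formule de Green adapt\'ee aux courants $dd^c$-ferm\'es \`a la fonction test $\phi=(-f)^a$, qui s'annule sur $\partial\mathcal O'$ (avec sa d\'eriv\'ee premi\`ere lorsque $a\geq 2$), coupl\'ee avec l'identit\'e $dd^c((-f)^a)=a(a-1)(-f)^{a-2}df\wedge d^cf-a(-f)^{a-1}dd^cf$. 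Le terme principal qui en r\'esulte, de la forme $\int(-g)(-f)^{a-1}R\wedge dd^cf$, se majore par $\int(-f)^{a-1}R\wedge dd^cf$ gr\^ace \`a $0\leq -g\leq 1$.

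\emph{Principale difficult\'e.} Deux obstacles techniques se pr\'esentent. D'une part, le terme r\'esiduel $\int(-g)(-f)^{a-2}R\wedge df\wedge d^cf$, bien que positif, doit \^etre absorb\'e dans un multiple de $\int(-f)^{a-1}R\wedge dd^cf$ par une seconde int\'egration par parties \'echangeant $df\wedge d^cf$ contre $dd^cf$. D'autre part, la formule de Green appliqu\'ee \`a un courant $R$ seulement $dd^c$-ferm\'e (et non $d$-ferm\'e) fait appara\^itre des termes correctifs en $dR$; leur contr\^ole exige une r\'egularisation $S_\epsilon=S\ast\chi_\epsilon$ par un noyau radial, un passage \`a la limite faible, et un suivi pr\'ecis des constantes lors de l'it\'eration sur $k$. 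C'est cette analyse technique, d\'evelopp\'ee dans \cite{Da-Elk-El}, qui permet d'\'etablir proprement l'in\'egalit\'e $A_k\leq A_{k+1}$ et de conclure par t\'elescopage.
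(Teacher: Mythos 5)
Un point de contexte d'abord~: l'article ne d\'emontre pas ce lemme, il le cite tel quel de \cite{Da-Elk-El}. Il n'y a donc pas de preuve interne \`a laquelle comparer votre texte, qui doit \^etre jug\'e comme argument autonome. Votre \'etape de troncature est correcte. En revanche le t\'elescopage, tel que vous le montez, ne redonne pas la constante de l'\'enonc\'e. L'identit\'e que vous \'ecrivez vous-m\^eme, $dd^c((-f)^a)=a(a-1)(-f)^{a-2}df\w d^cf-a(-f)^{a-1}dd^cf$, fait appara\^{\i}tre le terme principal avec le coefficient $a$~: l'int\'egration par parties donne, pour $R$ ferm\'e et modulo les termes de bord,
$$\int_{\mathcal O'}(-f)^aR\w dd^cg= -a(a-1)\int_{\mathcal O'}(-g)(-f)^{a-2}df\w d^cf\w R+a\int_{\mathcal O'}(-g)(-f)^{a-1}R\w dd^cf,$$
d'o\`u au mieux $A_k\leq(s-k)A_{k+1}$ et, apr\`es t\'elescopage, $A_0\leq s!\,A_s$, c'est-\`a-dire la constante $s!\,c_K^{-s}$ et non $c_K^{-s}$. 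Ce facteur $a$ n'est pas un artefact qu'un suivi plus soigneux ferait dispara\^{\i}tre~: l'in\'egalit\'e interm\'ediaire $\int(-f)^aR\w dd^cg\leq\int(-f)^{a-1}R\w dd^cf$ sans ce facteur est d\'ej\`a fausse dans le mod\`ele scalaire $R\equiv1$, $f(z)=|z|^2-1$, $g=\max(\log|z|/\log(1/\rho),-1)$ r\'egularis\'ee~: le membre de gauche vaut $(1-\rho^2)^a/\log(1/\rho^2)$, celui de droite $1/a$, et pour $a\geq4$ et $\rho^2\approx0{,}1$ le rapport d\'epasse $1$. Votre d\'ecoupage en $A_k$ ne peut donc pas \^etre r\'epar\'e tel quel~; il faut organiser le calcul autrement pour obtenir $c_K^{-s}$. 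Notez aussi que votre discussion du terme en $df\w d^cf$ est invers\'ee~: dans la formule ci-dessus il appara\^{\i}t affect\'e d'un signe moins devant une quantit\'e positive (car $-g\geq0$ et $df\w d^cf\w R\geq0$), donc il se jette~; il n'y a rien \`a absorber, et une seconde int\'egration par parties pour l'absorber d\'egraderait encore la constante.

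Le second probl\`eme est que tout ce qui fait la substance du lemme --- l'int\'egration par parties pour un courant $R$ qui n'est que $dd^c$-ferm\'e, o\`u les termes du type $\int u\,d^cv\w dR$ ne s'annulent pas, ainsi que la r\'egularisation de $S$ et le passage \`a la limite --- est explicitement renvoy\'e \`a \cite{Da-Elk-El}. Pour $R$ ferm\'e l'argument serait essentiellement classique~; c'est pr\'ecis\'ement le cas pluriharmonique qui demande un travail nouveau, et votre texte ne le fournit pas. En l'\'etat, il s'agit d'un plan dont l'\'etape cl\'e $A_k\leq A_{k+1}$ est non seulement non d\'emontr\'ee mais vraisemblablement fausse avec la constante $1$, et dont les points durs sont d\'el\'egu\'es \`a la r\'ef\'erence~; ce n'est pas une preuve.
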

    Si $S$ est de classe $\mathcal C^2$ alors le lemme reste vrai en omettant  l'hypoth\`ese de r\'egularit\'e de  $f$ et $g$.

    \begin{proof} \textit{(du th\'eor\`eme \ref{th1})} On proc\`ede par r\'ecurrence sur $q$.\\
    \begin{enumerate}
        \item[$\bullet$] \textbf{$q=1$:} Soit $T$ un courant positif pluriharmonique de bidimension $(n-1,n-1)$ sur $\C^n$ et $E\subset\Omega_T\subset \mathbb P^{n-1}$ de mesure non nulle. Si $w\in\mathbb P^{n-1}$, on note $L_w$ l'hyperplan de $\C^n$, d'\'equation $w_1z_1+...+w_nz_n=0$ et $[L_w]$ le courant d'int\'egration sur $L_w$. La fonction $\ds f_w(z)=\frac{w_1z_1+...+w_nz_n}{|w|}$ est psh sur $\C^n$ et $[L_w]=dd^c\log|f_w|$. Soit $v$ la fonction d\'efinie sur $\C^n$ par
            $$v(z)=\int_{\mathbb P^{n-1}}\log|f_w(z)|d\mu_E(w)\quad \hbox{o\`u}\quad\mu_E =\frac1{\mu(E)}\mu_{|E}$$
            alors $v$ est psh sur $\C^n$ qui v\'erifie $\log|z|-\eta\leq v(z)\leq \log|z|$ pour tout $z\in\C^n$ o\`u $\eta>0$ est une constante.\\

            Soit $\chi_j$ un noyau r\'egularisant qui ne  d\'epend que de $|z|$ sur $\C^n$, on note par $T_j=T*\chi_j$ le r\'egularis\'e de $T$ qui est un courant positif pluriharmonique de classe $\mathcal C^\infty$ sur $\C^n$.\\

            Fixons $r>0$ et soit $1<\delta<2$. Soit $\phi$ la fonction psh sur $\C^n$ d\'efinie par $\phi(z)=\max(-1,v(z)-\log(\delta r))$.\\
            Le lemme \ref{lem2}, appliqu\'e \`a  $\mathcal O=\C^n$ et $\mathcal O'=\{z\in\C^n;\ \phi(z)<0\}$ qui est inclus dans $\mathbb{B}(\delta re^\eta)$, donne
            $$\int_{|z|\leq r}T_j\w\beta^{n-2}\w dd^c\left(\left|\frac{z}{\delta re^\eta}\right|^2-1\right)\leq \frac1{\log\delta} \int_{|z|\leq \delta re^\eta}T_j\w\beta^{n-2}\w dd^c\phi$$
            et par suite
            $$\int_{|z|< r}T_j\w\beta^{n-1}\leq r^2\frac{\delta^2 e^{2\eta}}{\log\delta} \int_{|z|\leq \delta re^\eta}T_j\w\beta^{n-2}\w dd^cv.$$
            Soient $0<c_2<\delta e^\eta$ et $\psi$ une fonction $\mathcal C^\infty$ sur $\C^n$ v\'erifiant $\1_{\mathbb{B}(\delta e^\eta r)}\leq \psi\leq \1_{\mathbb{B}(\frac r{c_2})}$. On pose
            $$\begin{array}{ll}
                \bullet &\ds c_1=\frac{\mu(E)\log\delta}{\delta^2 e^{2\eta}} \\
                \bullet &\ds A_T:=\{r>0;\ ||T||(\partial \mathbb{B}(r))>0 \hbox{ ou } ||T||(\partial \mathbb{B}(\delta r))>0\}
              \end{array}$$
            $A_T$ est au plus d\'enombrable et pour $r\not\in A_T$, par passage \`a la limite quand $j\to+\infty$, le th\'eor\`eme de Fubini donne
            $$\begin{array}{lcl}
                \ds \int_{|z|< r}T\w\beta^{n-1}&\leq&\ds \frac{\mu(E)}{c_1}r^2 \int_{\C^n}\psi T\w\beta^{n-2}\w dd^cv\\
                & = &\ds \frac{r^2}{c_1}\int_E\int_{\C^n}\psi T\w dd^c\log|f_w|\w\beta^{n-2}d\mu(w) \\
                & \leq & \ds \frac{r^2}{c_1}\int_E\int_{|z|<\frac1{c_2}r}T\w [L_w]\w\beta^{n-2}d\mu(w)
            \end{array}$$
            Ce qui donne le r\'esultat pour le cas $q=1$.
        \item[$\bullet$]
            Supposons que le r\'esultat est vrai \`a l'ordre $q\geq1$, c'est \`a dire sur $G_{q,n}$, et prouvons le \`a l'ordre $q+1$. L'id\'ee est de ramener le probl\`eme de $G_{q+1,n}$ vers $G_{q,n}$, pour cela, on consid\`ere l'ensemble:
            $$Y_{q,n}=\{(L,\Lambda)\in G_{q,n}\times G_{q+1,n};\ L\subset \Lambda\}$$ et les projections canoniques:
            $$\xymatrix{G_{q,n}&\ar[l]_f Y_{q,n}\ar[r]^g &G_{q+1,n}\\
            X_{q,n}\ar[u]^{\pi_q}\ar[r]_{\sigma_q}&\C^n&\ar[l]^{\sigma_{q+1}}\ar[u]_{\pi_{q+1}}X_{q+1,n}}$$
            On muni $Y_{q,n}$ de la forme K\"ahlerienne $\omega=g^*\omega_q+f^*\omega_{q+1}$ et donc de la mesure de \textsc{Fubini-Study} correspondante ${\mathfrak m}$. On v\'erifie que $g_*{\mathfrak m}\otimes\mu_{q+1}=\mu_q\otimes f_*{\mathfrak m}$, de plus la valeur $\kappa={\mathfrak m}(g^{-1}(\Lambda))={\mathfrak m}(f^{-1}(L))$ est ind\'ependante de $L\in G_{q,n}$ $\mu_q-$presque partout et de $\Lambda\in G_{q+1,n}$ $\mu_{q+1}-$presque partout.\\
            On a $\sigma_q^*T$ est un courant positif de dimension $p+(q-1)(n-q)$ sur $X_{q,n}$, de m\^eme $\sigma_{q+1}^*T$ est un courant positif de dimension $p+q(n-q-1)$ sur $X_{q+1,n}$.\\
            Soit $\Lambda\in E\subset \Omega_T=G_{q+1,n}\mo E_T$.\\

            On remarque que $G_{q,q+1}(\Lambda)=f(g^{-1}(\Lambda))$, et comme $T_{|\Lambda}$ est positif pluriharmonique, d'apr\`es le lemme \ref{lem1}, on a
            $$\nu_{T_{|\Lambda}}(r)=\int_{L\in f(g^{-1}(\Lambda))} \nu_{T_{|L}}(r)d(f_*{\mathfrak m})(L)$$
            d'apr\`es Fubini on a,
            $$\begin{array}{lcl}
                \ds\int_E \nu_{T_{|\Lambda}}(r)d\mu_{q+1}(\Lambda) & = &\ds \int_E\left(\int_{L\in f(g^{-1}(\Lambda))}\nu_{T_{|L}}(r)d(f_*{\mathfrak m})(L)\right) d\mu_{q+1}(\Lambda) \\
                & = &\ds \int_{L\in f(g^{-1}(E))}\left(\int_{\Lambda\in g(f^{-1}(L))}\nu_{T_{|L}}(r)d(g_*{\mathfrak m})(\Lambda)\right)d\mu_q(L)\\
                & = &\ds \kappa  \int_{f(g^{-1}(E))}\nu_{T_{|L}}(r)d\mu_q(L).
            \end{array}$$
            Comme $f(g^{-1}(E))$ est de mesure non nulle dans $G_{q,n}$, d'apr\`es l'hypoth\`ese de r\'ecurrence, il existe $c_1,\ c_2>0$ tels que
            $$\ds c_1 \nu_T(c_2 r)\leq \int_{f(g^{-1}(E))} \nu_{T_{|L}}(r)d\mu_q(L).$$
            Donc
            $$\kappa c_1\: \nu_T(c_2 r)\leq \int_E \nu_{T_{|\Lambda}}(r) d\mu_{q+1}(\Lambda).$$
        \end{enumerate}
        \end{proof}

\subsection{Applications du th\'eor\`eme \ref{th1}}
    Le r\'esultat suivant est une illustration des r\'esultats pr\'ec\'edents sur les courants positifs pluriharmoniques d'ordres finis.
    \begin{theo}\label{th2}
        Soit $T$ un courant positif pluriharmonique de bidimension $(p,p)$ et d'ordre $\varrho$ fini sur $\C^n$. Alors pour presque tout $L\in G_{q,n}$, $T_{|L}$ est d'ordre $\varrho_L$ \'egal \`a $\varrho$.
    \end{theo}
    \begin{proof} La d\'emonstration se fait en deux \'etapes.\\

    \noindent\textit{Premi\`ere \'etape:  $\varrho_L\geq\varrho$, $\mu-$presque partout.}\\

        Par d\'efinition, il existe une suite $(r_m)_m$ croissante vers l'infinie telle que\\ $\log(\nu_T(r_m)).(\log(r_m))^{-1}\underset{m\to+\infty}\rt \varrho$. On consid\`ere alors l'ensemble
        $$ E=\left\{L\in \Omega_T;\ \lim_{m\to +\infty}\frac{\nu_{T_{|L}}(\alpha r_m)}{\nu_T(r_m)}=0,\ \hbox{pour tout }\alpha>0\right\}\cup \Omega_T^\complement$$ qui est de mesure nulle (d'apr\`es le corollaire \ref{cor2}). Pour $L\not\in E$, il existe $\alpha_0>0$, une sous suite, not\'ee de m\^eme $(r_m)_m$, tels que\\ $\limsup_{m\to +\infty} \nu_{T_{|L}}(\alpha_0 r_m)/ \nu_T(r_m)=\mathfrak{a}_L\in{}]0,+\infty]$. Or
        $$\frac{\log \nu_{T_{|L}}(\alpha_0 r_m)}{\log(\alpha_0r_m)}=\frac{\log \nu_{T_{|L}}(\alpha_0 r_m)-\log(\nu_T(r_m))}{\log (\alpha_0r_m)}+ \frac{\log \nu_T(r_m)}{\log r_m}\times\frac{\log r_m}{\log (\alpha_0r_m)}.$$
        Le  terme de gauche de cette \'egalit\'e  admet une limite-sup\'erieure plus petite que $\varrho_L$ quand $m$ tend vers $+\infty$, alors que le deuxi\`eme terme de droite tend vers $\varrho$.\\
        Dans le cas o\`u $\mathfrak{a}_L<+\infty$, $$\limsup_{m\to+\infty}\frac{\log \nu_{T_{|L}}(\alpha_0
        r_m)-\log(\nu_T(r_m))}{\log (\alpha_0r_m)}=\limsup_{m\to+\infty}\frac{\log\left(\frac{\nu_{T_{|L}}(\alpha_0
        r_m)}{\nu_T(r_m)}\right)}{\log (\alpha_0r_m)}=0.$$ Dans l'autre cas, $\mathfrak{a}_L=+\infty$, on a pour $m$ suffisamment grand,
        $$\frac{\log \nu_{T_{|L}}(\alpha_0 r_m)}{ \log (\alpha_0r_m)}\geq \frac{\log \nu_T(r_m)}{\log r_m}\times\frac{\log r_m}{\log (\alpha_0r_m)}.$$
        Donc $\varrho_L\geq \varrho$.\\

    \noindent\textit{Deuxi\`eme \'etape:  $\varrho_L\leq \varrho$, $\mu-$presque partout.}\\

        Pour tout $\epsilon\in{}]0,1[$ et $\gamma>1$ fix\'e, on pose
        $${\mathscr E}_{\epsilon,k}=\left\{L\in G_{q,n};\ \nu_{T_{|L}}(\gamma^k)>\left(\log \gamma^k\right)^{1+(\log(\log\gamma^k))^{\epsilon-1}} \nu_T(\gamma^k)\right\},\ k\in \N.$$ Alors
        $$\begin{array}{lcl}
            \ds\mu({\mathscr E}_{\epsilon,k})\left(\log \gamma^k\right)^{1+(\log(\log\gamma^k))^{\epsilon-1}} \nu_T(\gamma^k)& \leq &\ds \int_{{\mathscr E}_{\epsilon,k}}\nu_{T_{|L}}(\gamma^k)d\mu(L) \\
            & \leq &\ds \int_{G_{q,n}}\nu_{T_{|L}}(\gamma^k)d\mu(L)\\
            & \leq & \nu_T(\gamma^k)
        \end{array}$$
        Par suite
        $$\mu({\mathscr E}_{\epsilon,k})\leq \left(\log \gamma^k\right)^{-1-(\log(\log\gamma^k) )^{\epsilon-1}}. $$
        Posons ${\mathscr E}_\epsilon=\cap_{j\geq1}\cup_{k\geq j}{\mathscr E}_{\epsilon,k}.$ Puisque
        $$\ds\lim_{k\to+\infty}(\log k)^2\left(\log \gamma^k\right)^{-(\log(\log\gamma^k))^{\epsilon-1}}=\lim_{k\to+\infty}(\log k)^2\exp\{-(\log(\log\gamma^k))^\epsilon\}=0$$
        alors
        $$\ds\sum_{k=1}^{+\infty}\left(\log \gamma^k\right)^{-1-(\log (\log\gamma^k))^{\epsilon-1}} <+\infty$$
        et donc $\mu({\mathscr E}_\epsilon)=0$. Pour $L\not\in{\mathscr E}_\epsilon$, il existe $k_L$ tel que pour $k\geq k_L,\ \nu_{T_{|L}}(\gamma^k)\leq \left(\log \gamma^k\right)^{1+(\log(\log\gamma^k))^{\epsilon-1}}\nu_T(\gamma^k)$.\\
        Si $r$ est tel que $\gamma^{k-1}\leq r\leq \gamma^k$ alors
        \begin{equation}\label{eq 2.1}
            \nu_{T_{|L}}(r)\leq \left(\log \gamma r\right)^{1+(\log(\log r))^{\epsilon-1}}\nu_T(\gamma r)
        \end{equation}
        et ainsi
        $$\begin{array}{lcl}
            \ds \varrho_L:=\limsup_{r\rt+\infty}{\log \nu_{T_{|L}}(r)\over \log r}& \leq  &\ds\limsup_{r\rt+\infty} \left(\frac{(1+(\log(\log r))^{\epsilon-1})\log(\log \gamma r)}{\log r}+\right.  \\
            & &\ds \hfill +\left.\frac{\log\nu_T(\gamma r)}{\log r}\right)\\
            & \leq & \varrho.
          \end{array}$$
    \end{proof}

    \begin{defn}
        Une fonction d\'erivable $\rho:\ ]0,+\infty[\rt ]0,+\infty[$ est appel\'ee \textit{ordre pr\'ecis\'e} si elle admet une limite finie $\varrho$ \`a l'infini et $$\lim_{r\to+\infty}\rho'(r).r\log r=0.$$
        Un courant positif pluriharmonique $T$ d'ordre $\varrho$ fini est dit de \textit{type minimal }(resp. \textit{normal, maximal}) par rapport \`a un ordre pr\'ecis\'e $\rho(r)$, o\`u $\rho(r)\rt \varrho$, si la limite $$\sigma(T):=\limsup_{r\to+\infty}{\nu_T(r)\over r^{\rho(r)}}=0\ (\hbox{resp. } \sigma(T)\in{}]0,+\infty[,\ \sigma(T)=+\infty).$$
    \end{defn}

    Comme cons\'equence des r\'esultats pr\'ec\'edents on g\'en\'eralise, au courant positif pluriharmonique sur $G_{q,n}$, un r\'esultat d\'emontr\'e par \textsc{Gruman} \cite{Gr} pour les ensembles analytiques et par \textsc{Amamou-Ben Farah} \cite{Am-BF} pour les courants positifs ferm\'es sur $\P^{n-1}=G_{1,n}$.

    \begin{cor} \label{cor4}
        Soit $T$ un courant positif pluriharmonique de bidimension $(p,p)$ sur $\C^n$ d'ordre $\varrho$ fini et de type normal par rapport \`a un ordre pr\'ecis\'e $\rho(r)$. Alors pour presque tout $L\in G_{q,n}$, $T_{|L}$ est de type normal ou maximal par rapport \`a $\rho(r)$.
    \end{cor}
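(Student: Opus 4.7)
Je procéderais par l'absurde en supposant que l'ensemble
\begin{equation*}
A:=\{L\in\Omega_T;\ \sigma(T_{|L})=0\}
\end{equation*}
est de mesure de Fubini-Study strictement positive. Quitte à retirer un ensemble négligeable (grâce au théorème \ref{th2}, qui donne $\varrho_L=\varrho$ $\mu$-presque partout), on peut supposer que pour tout $L\in A$ la condition de type minimal s'écrit $\nu_{T_{|L}}(r)/r^{\rho(r)}\to 0$ quand $r\to+\infty$.

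L'étape suivante consiste à construire, par un argument de type Egorov analogue à celui utilisé dans la preuve du corollaire \ref{cor2}, un sous-ensemble $W\subset A$ avec $\mu(W)>0$ et une fonction $\epsilon(r)\to 0$ tels que
\begin{equation*}
\sup_{L\in W}\nu_{T_{|L}}(r)\leq \epsilon(r)\,r^{\rho(r)}\quad\text{pour tout }r\text{ assez grand}.
\end{equation*}
Concrètement, on peut poser $W=\bigcap_{k\geq 1}B_k$ où $B_k:=\{L\in A;\ \nu_{T_{|L}}(r)\leq \frac{1}{k}r^{\rho(r)},\ \forall r\geq M_k\}$, les $M_k$ étant choisis pour que $\mu(B_k)\geq \mu(A)-2^{-k-1}\mu(A)$. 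Le théorème \ref{th1} appliqué à $W$ fournit alors des constantes $c_1,c_2>0$ telles que, pour $r$ assez grand,
\begin{equation*}
c_1\,\nu_T(c_2 r)\leq \int_W \nu_{T_{|L}}(r)\,d\mu(L)\leq \mu(W)\,\epsilon(r)\,r^{\rho(r)}.
\end{equation*}

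En substituant $s=c_2 r$, j'aboutirais à
\begin{equation*}
\frac{\nu_T(s)}{s^{\rho(s)}}\leq \frac{\mu(W)}{c_1}\,\epsilon(s/c_2)\,\frac{(s/c_2)^{\rho(s/c_2)}}{s^{\rho(s)}}.
\end{equation*}
Le point technique principal, et le seul obstacle véritablement non trivial, est le contrôle du facteur de dilatation $(s/c_2)^{\rho(s/c_2)}/s^{\rho(s)}$: c'est précisément ici qu'intervient la condition définissant un ordre précisé, $\rho'(r)r\log r\to 0$. En écrivant $\rho(s)-\rho(s/c_2)=\int_{s/c_2}^s\rho'(t)\,dt$ et en majorant $|\rho'(t)|$ par $\epsilon/(t\log t)$ pour $t$ grand, on en déduit $|\rho(s)-\rho(s/c_2)|\log s=O(\epsilon)$, d'où $(s/c_2)^{\rho(s/c_2)}/s^{\rho(s)}\to c_2^{-\varrho}$. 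Il en résulterait $\sigma(T)=\limsup_{s\to+\infty}\nu_T(s)/s^{\rho(s)}=0$, contredisant l'hypothèse de type normal; donc $\mu(A)=0$, ce qui établit que $T_{|L}$ est de type normal ou maximal pour presque tout $L\in G_{q,n}$.
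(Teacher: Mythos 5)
Your proof is correct, but it is organized rather differently from the paper's. The paper argues in the positive direction: it fixes a sequence $(r_m)_m$ realizing $\sigma(T)$, invokes Corollary \ref{cor2} to obtain, for almost every $L$, an $\alpha_0>0$ with $\mathfrak{a}_L:=\limsup_m \nu_{T_{|L}}(\alpha_0 r_m)/\nu_T(r_m)>0$, and then factors $\nu_{T_{|L}}(\alpha_0 r_m)/(\alpha_0 r_m)^{\rho(\alpha_0 r_m)}$ into five terms to reach the explicit lower bound $\sigma_L\geq \mathfrak{a}_L\,\sigma(T)/\alpha_0^{\varrho}>0$. You instead reason by contradiction on the set $A$ of directions of minimal type, and you rerun the Egorov-plus-Theorem-\ref{th1} mechanism (which is precisely the engine inside the proof of Corollary \ref{cor2}) directly against the normalization $r^{\rho(r)}$ rather than against $\nu_T(r_m)$, concluding $\sigma(T)=0$. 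The two ingredients are the same in both proofs, namely Theorem \ref{th1} and the fact that $[\rho(r)-\rho(cr)]\log r\rt 0$, hence $r^{\rho(r)-\rho(cr)}\rt 1$; you obtain this by integrating $\rho'$, the paper by the mean value theorem in the term $Q_4(m)$, and both rest on $\rho'(t)\,t\log t\rt 0$. Your version is shorter, avoids extracting subsequences, and works with all large $r$ at once, but it loses the quantitative information $\sigma_L\geq \mathfrak{a}_L\,\sigma(T)/\alpha_0^{\varrho}$ that the paper's computation provides. Two minor points you should make explicit: the sets $B_k$ must be seen to be $\mu$-measurable (use the monotonicity of $r\mapsto\nu_{T_{|L}}(r)$, valid because $T_{|L}$ is pluriharmonic, to reduce the universal quantifier over $r\geq M_k$ to rational radii), and the appeal to Theorem \ref{th2} is not actually needed, since the condition $\sigma(T_{|L})=0$, i.e. $\nu_{T_{|L}}(r)/r^{\rho(r)}\rt 0$, makes sense and drives the contradiction whatever the value of $\varrho_L$.
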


    \begin{proof}
        Pour alleger l'\'ecriture on note par $\sigma:=\sigma(T)$ et $\sigma_L:=\sigma(T_{|L})$. Par hypoth\`ese, il existe une suite $(r_m)_m$ croissante vers $+\infty$ telle que $\nu_T(r_m).r^{-\rho(r_m)}\rt \sigma$. Soit
        $$E=\left\{L\in \Omega_T;\ \lim_{m\to +\infty}\frac{\nu_{T_{|L}}(\alpha r_m)}{\nu_T(r_m)}=0,\ \hbox{pour tout }\alpha>0\right\}\cup \Omega_T^\complement.$$ D'apr\`es le corollaire \ref{cor2}, $E$ est de mesure nulle. Pour $L\not\in E$, quitte \`a extraire une sous suite de $(r_m)_m$, il existe $\alpha_0>0$ tel que\\ $\mathfrak{a}_L:=\limsup_{m\to +\infty}\nu_{T_{|L}}(\alpha_0 r_m)/ \nu_T(r_m)\in{}]0,+\infty]$.\\
        \begin{equation}\label{eq 2.2}
            \underbrace{\nu_{T_{|L}}(\alpha_0 r_m)\over (\alpha_0 r_m)^{\rho(\alpha_0 r_m)}}_{Q_1(m)}=\underbrace{\nu_{T_{|L}}(\alpha_0 r_m)\over \nu_T(r_m)}_{Q_2(m)}\;\underbrace{\nu_T(r_m)\over r_m^{\rho(r_m)}}_{Q_3(m)}\;\underbrace{r_m^{\rho(r_m)-\rho(\alpha_0r_m)}}_{Q_4(m)}\; \underbrace{1\over \alpha_0^{\rho(\alpha_0r_m)}}_{Q_5(m)}
        \end{equation}
        Si $m\to+\infty$, la premi\`ere quantit\'e $Q_1(m)$ admet une limite-sup\'erieure inf\'erieure ou \'egale \`a $\sigma_L$, $Q_2(m)$ tend vers $\mathfrak{a}_L$, $Q_3(m)$ tend vers $\sigma$ et $Q_5(m)$ tend vers $\alpha_0^{-\varrho}$. Pour $Q_4(m)$, le cas $\alpha_0=1$ est \'evident, dans l'autre cas, d'apr\`es le th\'eor\`eme des accroissements finis, il existe $c_m$ entre $r_m$ et $\alpha_0r_m$ tel que $$\log(Q_4(m))=(\rho(r_m)-\rho(\alpha_0r_m))\log r_m= \rho'(c_m)(1-\alpha_0)r_m\log r_m$$
        donc pour $m$ suffisamment grand il existe une constante $Cte\geq 0$ tel que $$|\log(Q_4(m))|\leq Cte.|\rho'(c_m)c_m\log c_m|$$ et par suite $\log(Q_4(m))\rt 0$ si $m\to +\infty$. En effet:\\
            $\bullet$ Si $\alpha_0> 1$, on a $r_m< c_m$ donc $r_m\log r_m\leq c_m\log c_m$ et on peut prendre $Cte =\alpha_0-1$.\\
            $\bullet$ Si $\alpha_0<1$, on a $\alpha_0r_m< c_m<r_m$ donc $$r_m\log r_m=c_m\log c_m.{r_m\log r_m\over c_m\log c_m}\leq c_m\log c_m {\log r_m\over \alpha_0\log(\alpha_0r_m)}$$ et, pour $m$ suffisamment large, on peut choisir $Cte=2(1-\alpha_0)/\alpha_0$.\\
        Si on tend $m\to+\infty$ l'\'equation (\ref{eq 2.2}) donne $\sigma_L\geq \mathfrak{a}_L \sigma/\alpha_0^\varrho$ et donc $T_{|L}$ est au moins de type  normal par rapport \`a $\rho(r)$.
    \end{proof}

    \begin{rems}$ $
        \begin{itemize}
          \item S'il existe un Bor\'elien $E$ de mesure non nulle de $G_{q,n}$ et  une constante $b>0$ telle que pour presque tout $L\in E$ on ait $T_{|L}$ est de type $\sigma_L\leq b$ par rapport \`a un ordre pr\'ecis\'e $\rho(r)$ alors $T$ est de type fini par rapport \`a l'ordre pr\'ecis\'e $\rho(r)$.\\
              En effet, d'apr\`es le th\'eor\`eme \ref{th1}, il existe $c_1,\ c_2>0$ tels que pour $r>0$ (suffisamment grand) on ait
              $$\begin{array}{lcl}
                \ds\frac{\nu_T(r)}{ r^{\rho(r)}}& \leq &\ds c_1\int_E \frac{\nu_{T_{|L}}(c_2r)}{(c_2r)^{\rho(c_2r)}}\ r^{\rho(c_2r)-\rho(r)}c_2^{\rho(c_2r)}d\mu(L) \\
                &\leq & c_1b\mu(E) r^{\rho(c_2r)-\rho(r)}c_2^{\rho(c_2r)}
                \end{array}$$
              De plus, d'apr\`es la d\'emonstration du corollaire \ref{cor4}, le terme de droite de cette in\'egalit\'e admet une limite finie ($=c_1b\mu(E)c_2^\varrho$) quand $r\to+\infty$.
          \item On a prouv\'e que $$\ds\left\{L\in \Omega_T;\ \lim_{m\to +\infty}\frac{\nu_{T_{|L}}(\alpha r_m)}{\nu_T(r_m)}=+\infty\ \forall\alpha>0\right\}\subset\{L\in\Omega_T;\ \sigma_L=+\infty\}$$
              et on a le premier ensemble est de mesure nulle (voir le lemme \ref{lem3} suivant). A-t-on l'ensemble $\{L\in G_{q,n}\mo E_0;\ \sigma_L=+\infty\}$ est aussi de mesure nulle? c'est \`a dire que $T_{|L}$ est-il aussi de type normal par rapport \`a $\rho(r)$, $\mu-$presque partout?
        \end{itemize}
    \end{rems}
    \begin{lem}\label{lem3}
        Avec les m\^emes notations du corollaire \ref{cor4}, l'ensemble
        $$\mathscr A_\infty:=\left\{L\in \Omega_T;\ \exists\; \alpha_L>0,\ \lim_{m\to +\infty}\frac{\nu_{T_{|L}}(\alpha_L r_m)}{\nu_T(r_m)}= +\infty \right\}$$ est de mesure nulle.
    \end{lem}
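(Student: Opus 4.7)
Par croissance de la fonction $\nu_{T_{|L}}$ (cons\'equence de la positivit\'e et de la pluriharmonicit\'e de $T_{|L}$), tout $L\in\mathscr A_\infty$ associ\'e \`a un r\'eel $\alpha_L>0$ v\'erifie, pour tout entier $s\geq \alpha_L$,
$$\lim_{m\to+\infty}\frac{\nu_{T_{|L}}(sr_m)}{\nu_T(r_m)}\geq \lim_{m\to+\infty}\frac{\nu_{T_{|L}}(\alpha_Lr_m)}{\nu_T(r_m)}=+\infty.$$
On a donc $\mathscr A_\infty\subset \bigcup_{s\in\N^*}\mathscr A_s$ o\`u l'on pose $\mathscr A_s:=\{L\in\Omega_T;\ \nu_{T_{|L}}(sr_m)/\nu_T(r_m)\to +\infty\}$. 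Il suffit donc d'\'etablir que $\mu(\mathscr A_s)=0$ pour chaque $s$, et l'on raisonne par l'absurde.

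Supposons $\mu(\mathscr A_s)>0$. La formule de type \textsc{Crofton} (lemme \ref{lem1}) fournit la majoration
$$\int_{\mathscr A_s}\nu_{T_{|L}}(sr_m)\,d\mu(L)\leq \int_{G_{q,n}}\nu_{T_{|L}}(sr_m)\,d\mu(L)=\nu_T(sr_m).$$
Il s'agit ensuite de v\'erifier que le quotient $\nu_T(sr_m)/\nu_T(r_m)$ reste born\'e en $m$. Puisque $T$ est de type normal par rapport \`a l'ordre pr\'ecis\'e $\rho(r)$, on a $\nu_T(r_m)\geq (\sigma/2)\, r_m^{\rho(r_m)}$ et $\nu_T(sr_m)\leq (\sigma+1)(sr_m)^{\rho(sr_m)}$ pour $m$ suffisamment grand, d'o\`u
$$\frac{\nu_T(sr_m)}{\nu_T(r_m)}\leq C\, s^{\rho(sr_m)}\, r_m^{\rho(sr_m)-\rho(r_m)}.$$
Le facteur $s^{\rho(sr_m)}$ tend vers $s^\varrho$, et le facteur $r_m^{\rho(sr_m)-\rho(r_m)}$ tend vers $1$ en vertu de l'argument du facteur $Q_4(m)$ de la preuve du corollaire \ref{cor4} (th\'eor\`eme des accroissements finis appliqu\'e \`a $\rho$, combin\'e \`a la condition $\rho'(r)r\log r\to 0$). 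Il existe donc une constante $K_s>0$ telle que $\nu_T(sr_m)/\nu_T(r_m)\leq K_s$ pour tout $m$ assez grand.

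En divisant l'in\'egalit\'e int\'egrale pr\'ec\'edente par $\nu_T(r_m)$ puis en appliquant le lemme de \textsc{Fatou}, on obtient
$$\int_{\mathscr A_s}\liminf_{m\to+\infty}\frac{\nu_{T_{|L}}(sr_m)}{\nu_T(r_m)}\,d\mu(L)\leq \liminf_{m\to+\infty}\frac{\nu_T(sr_m)}{\nu_T(r_m)}\leq K_s.$$
Mais par d\'efinition de $\mathscr A_s$, l'int\'egrand du membre de gauche vaut $+\infty$ en tout point de $\mathscr A_s$; l'int\'egrale serait donc infinie d\`es que $\mu(\mathscr A_s)>0$, contradiction. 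Ainsi $\mu(\mathscr A_s)=0$ pour tout $s\in\N^*$, et par cons\'equent $\mu(\mathscr A_\infty)=0$. L'\'etape d\'elicate de ce plan est la majoration uniforme du quotient $\nu_T(sr_m)/\nu_T(r_m)$, qui repose de mani\`ere essentielle sur la d\'efinition de l'ordre pr\'ecis\'e; une fois celle-ci \'etablie, le reste d\'ecoule directement de la formule de \textsc{Crofton} et du lemme de \textsc{Fatou}.
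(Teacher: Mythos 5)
Votre preuve est correcte. Elle suit le m\^eme squelette que celle du papier --- d\'ecomposition $\mathscr A_\infty\subset\bigcup_s\mathscr A_s$ via la croissance de $\nu_{T_{|L}}$, puis formule de type \textsc{Crofton} (lemme \ref{lem1}) pour majorer $\int_{\mathscr A_s}\nu_{T_{|L}}(sr_m)\,d\mu$ par $\nu_T(sr_m)$, et enfin exploitation du type normal de $T$ par rapport \`a l'ordre pr\'ecis\'e $\rho$ --- mais la conclusion est organis\'ee diff\'eremment. Le papier utilise le th\'eor\`eme d'\textsc{Egorov} pour rendre uniforme la convergence de $\nu_T(r_m)/\nu_{T_{|L}}(s_0r_m)$ vers $0$, en d\'eduit que $\nu_T(r_m)/\nu_T(s_0r_m)\to 0$, puis montre que cela force $\sigma=0$ (en utilisant \`a ce stade la factorisation du type $Q_4(m)$), d'o\`u la contradiction. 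Vous proc\'edez dans l'ordre inverse~: vous \'etablissez d'abord, gr\^ace aux encadrements $\nu_T(r_m)\geq(\sigma/2)r_m^{\rho(r_m)}$ et $\nu_T(sr_m)\leq(\sigma+1)(sr_m)^{\rho(sr_m)}$ combin\'es \`a l'argument des accroissements finis, que $\nu_T(sr_m)/\nu_T(r_m)$ reste born\'e, puis le lemme de \textsc{Fatou} donne directement la contradiction avec $\mu(\mathscr A_s)>0$ puisque l'int\'egrand tend vers $+\infty$ sur $\mathscr A_s$. Votre variante a l'avantage d'\'eviter \textsc{Egorov} (et la petite d\'elicatesse consistant \`a remplacer $A_{s_0}$ par un sous-ensemble de mesure positive o\`u la convergence est uniforme)~; les deux approches reposent en d\'efinitive sur les m\^emes deux ingr\'edients quantitatifs, \`a savoir \textsc{Crofton} et la condition $\rho'(r)r\log r\to 0$ de l'ordre pr\'ecis\'e.
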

    \begin{proof}
            Pour tout $s\in\N^*$, on consid\`ere l'ensemble $$A_s:=\left\{ L\in \Omega_T;\ \lim_{m\to +\infty}\frac{\nu_T(r_m)}{ \nu_{T_{|L}}(s r_m) }=0\right\}.$$ Si $L\in \mathscr A_\infty$ alors pour tout $s\geq \alpha_L$, on a $L\in A_s$. Donc $\mathscr A_\infty\subset \cup_{s\in\N^*}A_s$. Par suite pour montrer que $\mu(\mathscr A_\infty)=0$ il suffit de montrer que $\mu(A_s)=0$ pour tout $s\in\N^*$.\\
            Supposons qu'il existe $s_0>0$ tel que $A_{s_0}$ soit de mesure $\mu(A_{s_0})>0$. On a pour tout $L\in A_{s_0}$, $\lim_{m\to +\infty} \nu_T(r_m)/\nu_{T_{|L}}(s_0 r_m)=0$. D'apr\`es le th\'eor\`eme d'\textsc{Egorov}, on peut supposer que la convergence de cette suite vers 0 est uniforme sur $A_{s_0}$. Soit $\epsilon>0$, il existe $m_\epsilon>0$ tel que pour tout $m\geq m_\epsilon$ on a
            \begin{equation}\label{eq 2.3}
                {\nu_T(r_m)\over \nu_{T_{|L}}(s_0 r_m)}\leq \epsilon\quad \forall\;L\in A_{s_0}.
            \end{equation}
            D'autre part, d'apr\`es le lemme \ref{lem1}, pour tout $m>0$ on a  $$\int_{A_{s_0}}\nu_{T_{|L}}(s_0 r_m)d\mu(L)\leq \int_{G_{q,n}}\nu_{T_{|L}}(s_0 r_m)d\mu(L)=\nu_T(s_0 r_m).$$ Donc
            \begin{equation}\label{eq 2.4}
                \ds\int_{A_{s_0}}\frac{\nu_{T_{|L}}(s_0 r_m)}{\nu_T(s_0 r_m)}d\mu(L)\leq 1
            \end{equation}
            Les deux in\'egalit\'es (\ref{eq 2.3}) et (\ref{eq 2.4}) donnent, pour tout $m\geq m_\epsilon$,
            $$\begin{array}{lcl}
                \ds\mu(A_{s_0})\frac{\nu_T(r_m)}{ \nu_T(s_0 r_m)}&=&\ds\int_{A_{s_0}}\frac{\nu_T(r_m)}{\nu_T(s_0 r_m)}d\mu(L)\\
                & = &\ds\int_{A_{s_0}}\frac{\nu_T(r_m)}{\nu_{T_{|L}}(s_0 r_m)}\frac{\nu_{T_{|L}}(s_0 r_m)}{\nu_T(s_0 r_m)}d\mu(L)\\
                & \leq &\ds \epsilon\int_{A_{s_0}}\frac{\nu_{T_{|L}}(s_0 r_m)}{\nu_T(s_0 r_m)}d\mu(L)\leq\epsilon
              \end{array}$$  c'est \`a dire que $\lim_{m\to +\infty}\nu_T(r_m)/ \nu_T(s_0 r_m)=0$.
            Donc $$\begin{array}{lcl}
                     \sigma & := &\ds \lim_{m\to +\infty}\frac{\nu_T(r_m)}{r_m^{\rho(r_m)}} \\
                      & = &\ds \lim_{m\to+\infty}\left(\frac{\nu_T(r_m)}{\nu_T(s_0 r_m)}\times \frac{\nu_T(s_0 r_m)}{(s_0 r_m)^{\rho(s_0 r_m)}}\times s_0^{\rho(s_0 r_m)}\times r_m^{\rho(s_0 r_m)-\rho( r_m)}\right)\\
                      & = & 0
                   \end{array}$$ ce qui est absurde car $\sigma > 0$.
        \end{proof}
        Modulo une petite perturbation, il existe un ordre pr\'ecis\'e $\chi(r)$ par rapport auquel $T$ ainsi que $T_{|L}$, pour presque tout $L\in G_{q,n}$, sont de types minimales et c'est donn\'e par le lemme suivant:
        \begin{lem}\label{lem4}
            Si $T$ est de type normal par rapport \`a un ordre pr\'ecis\'e $\rho(r)$ alors pour presque tout $L\in G_{q,n}$, $T_{|L}$ est de type fini $($minimal ou normal$)$ par rapport \`a l'ordre pr\'ecis\'e
            $$\chi(r)=\left\{\begin{array}{ll}
                               \ds\rho(r)+\frac{\log(\log(e-1+r))}{\log r} & si\ r\neq1 \\
                               \rho(1)+\frac 1e & si\  r=1
                             \end{array}\right.$$
            En particulier, si $T$ est de type minimal par rapport \`a $\rho(r)$ alors $T$ et $T_{|L}$, pour presque tout $L\in G_{q,n}$, sont de types minimales par rapport \`a $\chi(r)$.
        \end{lem}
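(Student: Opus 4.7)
The plan is to verify first that $\chi$ is a valid ordre pr\'ecis\'e, then deduce the statement for $T$ itself by a direct comparison, and finally treat $T_{|L}$ by combining Lemma \ref{lem1} (the formule de Crofton) with an averaging/Fubini argument together with the monotonicity of the Lelong function of a positive plurisousharmonique current.

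First I would check that $\chi$ is indeed un ordre pr\'ecis\'e. Since $\rho(r)\to\varrho$ and $\log\log(e-1+r)/\log r\to 0$, one has $\chi(r)\to\varrho$; differentiating the added term and using $\rho'(r)r\log r\to 0$ gives $\chi'(r)r\log r\to 0$. The key identity is
$$r^{\chi(r)}=r^{\rho(r)}\cdot r^{\log\log(e-1+r)/\log r}=r^{\rho(r)}\log(e-1+r).$$
For $T$ itself, the normal type of $T$ par rapport \`a $\rho$ gives $\nu_T(r)\leq M\,r^{\rho(r)}$ pour $r$ assez grand, whence $\nu_T(r)/r^{\chi(r)}\leq M/\log(e-1+r)\to 0$, so $\sigma(T)$ par rapport \`a $\chi$ est nul, c'est-\`a-dire $T$ est de type minimal par rapport \`a $\chi$; this works a fortiori si $T$ est d\'ej\`a de type minimal par rapport \`a $\rho$.

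For $T_{|L}$, the core idea is to integrate Lemme \ref{lem1} against an integrable poids in $r$ and apply Fubini. Concretely, pour $\alpha>1$,
$$\int_{G_{q,n}}\int_1^{+\infty}\frac{\nu_{T_{|L}}(r)}{r^{1+\rho(r)}\log^{\alpha}(e-1+r)}\,dr\,d\mu(L)=\int_1^{+\infty}\frac{\nu_T(r)}{r^{1+\rho(r)}\log^{\alpha}(e-1+r)}\,dr\leq M\int_1^{+\infty}\frac{dr}{r\log^{\alpha}(e-1+r)}<+\infty,$$
so pour $\mu$-presque tout $L$, l'int\'egrale int\'erieure est finie. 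As a positive pluriharmonique current is a fortiori positive psh ($dd^cT=0$ est trivialement $\geq 0$), the slice Lelong function $r\mapsto\nu_{T_{|L}}(r)$ est croissante. I would then convert this integrability into the pointwise envelope bound by monotonie: if at some $r_0$ one had $\nu_{T_{|L}}(r_0)\geq A\,r_0^{\rho(r_0)}\log(e-1+r_0)$, then $\nu_{T_{|L}}(r)\geq\nu_{T_{|L}}(r_0)$ sur $[r_0,2r_0]$, and the ordre pr\'ecis\'e property yields $(2r_0)^{\rho(2r_0)}\leq C\,r_0^{\rho(r_0)}$ et $\log(e-1+2r_0)\leq C\log(e-1+r_0)$; the integral of the test density sur $[r_0,2r_0]$ is then minor\'ee par un multiple de $A/(\log r_0)^{\alpha-1}$. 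Thinning a sequence $r_k\to+\infty$ avec $r_{k+1}>2r_k$ le long de laquelle $\nu_{T_{|L}}(r_k)/(r_k^{\rho(r_k)}\log(e-1+r_k))\to+\infty$ produces a divergent s\'erie qui contredit l'int\'egrabilit\'e pour l'\'echantillon $L$, d'o\`u $T_{|L}$ est de type fini par rapport \`a $\chi$ pour $\mu$-presque tout $L$. The particular case o\`u $T$ est de type minimal par rapport \`a $\rho$ se traite par un raffinement du m\^eme argument: le facteur $\nu_T(r)/r^{\rho(r)}=o(1)$ permet de rendre l'int\'egrale mod\'er\'ee arbitrairement petite \`a l'infini, ce qui donne $\sigma(T_{|L})$ par rapport \`a $\chi$ nul pour $\mu$-presque tout $L$.

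The hard part will be the passage from the $L^1$ integrability of the slice Lelong function against the weight $1/(r^{1+\rho(r)}\log^{\alpha}(e-1+r))$ to the pointwise envelope $\nu_{T_{|L}}(r)=O(r^{\rho(r)}\log(e-1+r))$: this requires a careful thinning argument and a uniform dyadic comparison of $r^{\rho(r)}$ using the précised-order property $\rho'(r)r\log r\to 0$, together with the monotonicity of $\nu_{T_{|L}}$; the bookkeeping of constants through Fubini, monotonie and thinning is the main technical step.
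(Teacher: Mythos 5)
Your verification that $\chi$ is an ordre pr\'ecis\'e and the identity $r^{\chi(r)}=r^{\rho(r)}\log(e-1+r)$ are correct, and the treatment of $T$ itself is fine. But the central step for $T_{|L}$ has a genuine gap. Your method --- integrate the formule de Crofton against the weight $r^{-1-\rho(r)}\log^{-\alpha}(e-1+r)$ with $\alpha>1$, apply Fubini, then use the monotonie of $\nu_{T_{|L}}$ --- can only ever produce the envelope $\nu_{T_{|L}}(r)=o\bigl(r^{\rho(r)}\log^{\alpha}(e-1+r)\bigr)$ for each fixed $\alpha>1$, which is strictly weaker than finite type with respect to $\chi$ (that is the case $\alpha=1$, for which the weight is no longer integrable). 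Concretely, your claimed contradiction fails: if $A_k:=\nu_{T_{|L}}(r_k)/(r_k^{\rho(r_k)}\log(e-1+r_k))\to+\infty$ along a thinned sequence, the contribution of $[r_k,2r_k]$ to the weighted integral is only of order $A_k\,(\log r_k)^{1-\alpha}$, and the series $\sum_k A_k(\log r_k)^{1-\alpha}$ can perfectly well converge (take $A_k=k$ and $r_k=e^{e^k}$). So integrability does not force $\limsup_r \nu_{T_{|L}}(r)/r^{\chi(r)}<+\infty$; intersecting over a sequence $\alpha_j\downarrow 1$ does not help either, since a function like $r^{\rho(r)}\log(e-1+r)\log\log\log r$ is $o(\log^{\alpha})$ for every $\alpha>1$ without being $O(\log)$.

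The paper circumvents exactly this loss by working on the discrete scale $r=\gamma^k$ with a Chebyshev/Borel--Cantelli argument: from the Crofton formula, $\mu\{L:\ \nu_{T_{|L}}(\gamma^k)>c_k\,\nu_T(\gamma^k)\}\leq 1/c_k$ with the threshold $c_k=(\log\gamma^k)^{1+(\log\log\gamma^k)^{\epsilon-1}}$, whose reciprocals are summable; outside a null set this gives the pointwise bound $\nu_{T_{|L}}(r)\leq(\log\gamma r)^{1+(\log\log r)^{\epsilon-1}}\nu_T(\gamma r)$ (in\'egalit\'e (\ref{eq 2.1})), and letting $\epsilon\to 0$, $\gamma\to1$ along a countable family the parasitic factor $\exp((\log\log r)^{\epsilon})$ tends to the constant $e$, yielding $\nu_{T_{|L}}(r)\leq e\log(r)\,\nu_T(r)$ and hence $\limsup_r\nu_{T_{|L}}(r)/r^{\chi(r)}\leq e\,\sigma(T)$. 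If you want to keep an averaging strategy, you must replace your continuous weight by this discrete Chebyshev scheme (or reuse in\'egalit\'e (\ref{eq 2.1}) directly, which is how the paper proves the lemma); as written, your argument proves a weaker statement than Lemme \ref{lem4}.
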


    \begin{proof}
        V\'erifiant d'abord que $\chi(r)$ est bien un ordre pr\'ecis\'e. En effet $\chi$ est d\'erivable sur $]0+\infty[$ et on a $\chi(r)\underset{r\to+\infty}\rt\varrho$ et
        $$\chi'(r)=\rho'(r)+\frac1{(e-1+r)\log r}\frac1{\log(e-1+r)}-\frac1{r\log r}\frac{\log(\log(e-1+r))}{\log(r)}$$ donc $r\log(r)\chi'(r)\underset{r\to+\infty}\rt 0$.\\
        D'apr\`es l'in\'egalit\'e (\ref{eq 2.1}), pour tout $s\in\N,\ s\geq 2$, en prenant $\epsilon=1/s$ et $\gamma=1+1/s$, il existe un ensemble n\'egligeable ${\mathscr F}_s$ de $G_{q,n}$ v\'erifiant pour tout $L\not\in {\mathscr F}_s$,
        $$\nu_{T_{|L}}(r)\leq \nu_T((1+\frac1 s)r)\left(\log(1+\frac1 s) r\right)^{1+(\log(\log r))^{-1+1/s}}.$$
        Soit $\mathscr F:=\cup_{s\geq 2}\mathscr F_s$. Alors $\mathscr F$ est n\'egligeable et pour tout $L\not\in {\mathscr F}$ on a $$\nu_{T_{|L}}(r)\leq \nu_T((1+\frac1 s)r)\left(\log(1+\frac1 s)r\right)^{1+(\log(\log r))^{-1+1/s}}\quad\forall\; s\geq 2.$$
        Comme la fonction  $\nu_T$ est semi-continue sup\'erieurement, si on fait tendre $s$ vers $+\infty$ dans l'in\'egalit\'e pr\'ec\'edente on obtient
        $$\nu_{T_{|L}}(r)\leq \left(\log r\right)^{1+1/\log(\log r)}\nu_T(r)=e \log(r)\nu_T(r).$$
        Par suite
        $$\limsup_{r\to+\infty}\frac{\nu_{T_{|L}}(r)}{r^{\chi(r)}}\leq \limsup_{r\to+\infty}e \log(r)\frac{\nu_T(r)}{r^{\rho(r)}\log(e-1+r)}= e\sigma(T).$$
    \end{proof}

\section{Ordres directionnels}
    Dans cette section on s'int\'eresse au ordres (et ordres directionnels) des courants positifs  de bidegr\'e $(k,k)$ dans $\C^N=\C^n\times \C^m$ o\`u $k\leq n$; on utilise alors les notations  $\beta_z=dd^c|z|^2$, $\beta_t=dd^c|t|^2$, $\alpha_z=dd^c\log|z|^2$ et $\alpha_t=dd^c\log|t|^2$ pour tout $(z,t)\in \C^n\times \C^m$.\\
    On a besoin du lemme de type \textsc{Lelong-Jensen} suivant:
    \begin{lem}\label{lem5} \cite{Fe}
        Soient $T$ un courant positif plurisousharmonique  de bidegr\'e $(k,k)$ dans $\C^N=\C^n\times \C^m$ o\`u $k<n$ et $D$ un bor\'elien relativement compact de $\C^m$. Alors pour tous $0<r_1<r_2$,
        $$\begin{array}{lcl}
            A(r_1,r_2)&:=&\ds \frac1{r_2^{2(n-k)}}\int_{\mathbb{B}_n(r_2)\times D}T\w \beta_z^{n-k}\w\beta_t^m+\\
            & & \hfill- \ds\frac1{ r_1^{2(n-k)}} \int_{\mathbb{B}_n(r_1)\times D}T\w \beta_z^{n-k}\w\beta_t^m\\
            & = &\ds\int_{\mathbb{B}_n(r_1,r_2)\times D}T\w \alpha_z^{n-k}\w\beta_t^m +\\
            & &\hfill+\ds\int_{r_1}^{r_2}\left(\frac1{ s^{2(n-k)}}-\frac1{r_2^{2(n-k)}}\right)sds\int_{\mathbb{B}_n(s)\times D}dd^cT\w \beta_z^{n-k-1}\w\beta_t^m\\
            & &\ds \hfill+ \left(\frac1{r_1^{2(n-k)}}-\frac1{r_2^{2(n-k)}}\right) \int_0^{r_1}sds\int_{\mathbb{B}_n(s)\times D}dd^cT\w \beta_z^{n-k-1}\w\beta_t^m.
        \end{array}$$
    \end{lem}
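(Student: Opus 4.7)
Le plan est de proc\'eder par r\'egularisation, puis de faire la preuve pour un courant lisse, o\`u la formule se r\'eduit \`a une application soigneuse de Stokes ajust\'ee pour tenir compte du fait que $T$ n'est pas ferm\'e (mais seulement $dd^c T\geq 0$).

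\textbf{Etape 1 (R\'egularisation).} Je convoluerais $T$ avec un noyau r\'egularisant radial $\chi_\epsilon$ pour obtenir $T_\epsilon=T*\chi_\epsilon$, qui est un $(k,k)$-courant positif lisse et v\'erifie $dd^cT_\epsilon=(dd^cT)*\chi_\epsilon\geq 0$. Les deux membres de l'\'egalit\'e sont des formes lin\'eaires continues en $T$ pour la convergence faible (en choisissant $r_1,r_2$ hors de l'ensemble au plus d\'enombrable des rayons charg\'es par $\|T\|$ ou $\|dd^cT\|$), donc il suffit d'\'etablir l'identit\'e pour $T$ lisse, puis de passer \`a la limite en $\epsilon$ et en $r_1,r_2$.

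\textbf{Etape 2 (Identit\'e ponctuelle sur $\C^n\smallsetminus\{0\}$).} Un calcul direct avec $\rho=|z|^2$ donne
$$\alpha_z=\frac{\beta_z}{\rho}-\frac{d\rho\w d^c\rho}{\rho^2},$$
et donc, puisque $(d\rho\w d^c\rho)^2=0$, on obtient sur $\C^n\smallsetminus\{0\}$
$$\beta_z^{n-k}=|z|^{2(n-k)}\alpha_z^{n-k}+(n-k)|z|^{2(n-k-2)}\,d|z|^2\w d^c|z|^2\w\alpha_z^{n-k-1}.$$
Cette identit\'e permet de traduire le passage entre $\beta_z^{n-k}/r^{2(n-k)}$ et $\alpha_z^{n-k}$.

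\textbf{Etape 3 (Formule de Lelong-Jensen pour $F$).} Pour $T$ lisse, j'introduirais
$$H(s):=\int_{\mathbb{B}_n(s)\times D}T\w\beta_z^{n-k}\w\beta_t^m,\quad G(s):=\int_{\mathbb{B}_n(s)\times D}dd^cT\w\beta_z^{n-k-1}\w\beta_t^m,$$
et poserais $F(s):=H(s)/s^{2(n-k)}$, de sorte que $A(r_1,r_2)=F(r_2)-F(r_1)$. L'id\'ee centrale est d'appliquer Stokes dans la couronne $\mathbb{B}_n(r_1,r_2)\times D$ au produit $T\w dd^c(\varphi)\w\beta_z^{n-k-1}\w\beta_t^m$ avec $\varphi(z)$ une primitive appropri\'ee (typiquement $\varphi(z)=\frac{|z|^2}{r_2^{2(n-k)}}-\frac{|z|^2}{s^{2(n-k)}}$ ou $\max$ associ\'e) pour d\'eplacer $dd^c$ sur $T$. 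Le terme de bord, combin\'e avec la formule de l'\'etape 2, produit l'int\'egrale $\int_{\mathbb{B}_n(r_1,r_2)\times D}T\w\alpha_z^{n-k}\w\beta_t^m$; le terme int\'erieur est une int\'egrale de $dd^cT\w\beta_z^{n-k-1}\w\beta_t^m$ contre une fonction scalaire du rayon, qu'on r\'e\'ecrit en radiale via Fubini.

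\textbf{Etape 4 (Reconnaissance des noyaux).} Par la formule de coaire en la variable radiale $s$, l'int\'egrale sur $\mathbb{B}_n(r_1,r_2)\times D$ du terme en $dd^cT$ se r\'e\'ecrit comme
$$\int_{r_1}^{r_2}\Bigl(\tfrac{1}{s^{2(n-k)}}-\tfrac{1}{r_2^{2(n-k)}}\Bigr)s\,ds\int_{\mathbb{B}_n(s)\times D}dd^cT\w\beta_z^{n-k-1}\w\beta_t^m,$$
le noyau $\bigl(s^{-2(n-k)}-r_2^{-2(n-k)}\bigr)s$ apparaissant naturellement comme la primitive (en $s$) de la d\'eriv\'ee de $s\mapsto s^{-2(n-k)}$ multipli\'ee par $s$. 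La contribution venant de la boule int\'erieure $\mathbb{B}_n(r_1)$ (o\`u le test radial est constant) donne de la m\^eme fa\c con le terme $\bigl(r_1^{-2(n-k)}-r_2^{-2(n-k)}\bigr)\int_0^{r_1}s\,ds\,G(s)$.

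\textbf{Obstacle principal.} Le point d\'elicat est le comportement au voisinage de $z=0$: la forme $\alpha_z^{n-k}$ n'a pas de sens en $0$, et il faut justifier que l'int\'egration par parties ne g\'en\`ere pas de terme singulier. Sur la couronne $\mathbb{B}_n(r_1,r_2)$ (avec $r_1>0$) il n'y a pas de probl\`eme; c'est pour la contribution de $\mathbb{B}_n(r_1)$, venant de l'int\'egration en $s$ jusqu'\`a $0$, que l'on doit utiliser la positivit\'e de $dd^cT$ et l'existence de $G(s)$ comme fonction \`a croissance bien contr\^ol\'ee (finie pour $s$ fini gr\^ace \`a la masse localement finie des courants pluri\-sous\-harmoniques). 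Une fois le calcul \'etabli pour $T$ lisse, les passages \`a la limite sont standards (convergence monotone pour $G$, convergence faible pour $H$), compte tenu du choix de $r_1,r_2$ hors de l'ensemble n\'egligeable des rayons singuliers.
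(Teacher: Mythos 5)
Remarque pr\'ealable~: l'article ne d\'emontre pas ce lemme, il le cite tel quel de la th\`ese de \textsc{Feki} \cite{Fe}; il n'y a donc pas de preuve interne \`a laquelle comparer votre texte, et je juge votre esquisse sur ses propres m\'erites. La strat\'egie que vous proposez (r\'egularisation, identit\'e ponctuelle reliant $\beta_z^{n-k}$ et $\alpha_z^{n-k}$ hors de l'origine, int\'egration par parties radiale, passage \`a la limite) est bien la voie standard pour ce type de formule de \textsc{Lelong-Jensen}, et votre \'etape 2 est correcte. Mais le c\oe ur de la preuve manque~: tout le contenu du lemme r\'eside dans la forme exacte des deux noyaux $(s^{-2(n-k)}-r_2^{-2(n-k)})s$ et $(r_1^{-2(n-k)}-r_2^{-2(n-k)})$, et c'est pr\'ecis\'ement le calcul que vous n'effectuez pas~: \`a l'\'etape 3 la fonction test n'est pas fix\'ee et l'apparition des noyaux est affirm\'ee plut\^ot que d\'emontr\'ee. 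Le chemin concret est de v\'erifier, pour $T$ lisse et avec vos notations $H$ et $G$, l'identit\'e diff\'erentielle $\frac{d}{dr}\bigl(r^{-2(n-k)}H(r)\bigr)=\frac{d}{dr}\int_{\mathbb{B}_n(\epsilon,r)\times D}T\w\alpha_z^{n-k}\w\beta_t^m+\frac{2(n-k)}{r^{2(n-k)+1}}\int_0^r sG(s)\,ds$ (ce sont les deux int\'egrations par parties successives qui font appara\^{\i}tre $dd^cT$), puis d'int\'egrer de $r_1$ \`a $r_2$~: l'\'echange des int\'egrations en $u$ et en $s$ dans $\int_{r_1}^{r_2}\frac{2(n-k)}{u^{2(n-k)+1}}\bigl(\int_0^u sG(s)\,ds\bigr)du$ redonne exactement les deux noyaux de l'\'enonc\'e selon que $s<r_1$ ou $r_1<s<u$.

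Second point, qui est une vraie obstruction dans votre r\'edaction~: $D$ n'est qu'un bor\'elien, de sorte qu'appliquer Stokes sur $\mathbb{B}_n(r_1,r_2)\times D$ n'a pas de sens tel quel. La fa\c{c}on propre de r\'ealiser votre id\'ee est d'observer que, pour $T$ lisse, la composante de $dd^cT$ de bidegr\'e $(k+1,k+1)$ en $z$ et $(0,0)$ en $t$ co\"{\i}ncide avec $dd^c_z$ appliqu\'e \`a la composante de $T$ de bidegr\'e $(k,k)$ en $z$ et $(0,0)$ en $t$ (toute d\'eriv\'ee en $t$ augmente le degr\'e en $t$ et s'annule contre $\beta_t^m$)~; on fixe alors $t$, on applique la formule de \textsc{Lelong-Jensen} classique en une seule famille de variables \`a la forme positive $T^z(\cdot,t)$ sur $\C^n$, qui est $dd^c_z$-positive, et on int\`egre en $t\in D$ par Fubini, ce qui est licite pour un bor\'elien arbitraire. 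Cette r\'eduction fait aussi dispara\^{\i}tre l'obstacle que vous signalez en $z=0$, qui n'en est pas un ici~: $r_1>0$ \'ecarte l'origine du terme en $\alpha_z^{n-k}$, et $\int_0^{r_1}sG(s)\,ds\leq\frac{r_1^2}{2}G(r_1)<+\infty$ puisque $G$ est positive, croissante et finie. Enfin, au passage \`a la limite, il faut non seulement \'eviter les rayons charg\'es par $\|T\|$ et $\|dd^cT\|$, comme vous le dites, mais aussi traiter le bord de $D$, par exemple en \'etablissant d'abord la formule pour des $D$ \`a bord n\'egligeable puis en concluant par additivit\'e d\'enombrable en $D$ des deux membres.
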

    Par cons\'equent, la fonction $$\ds r\longmapsto \mathscr N_{(T,D)}(r):=\frac{1}{r^{2(n-k)}}\int_{B_n(r)\times D}T\w \beta_z^{n-k}\w\beta_t^m$$ est positive et croissante, ce qui explique l'existence du nombre de \textsc{Lelong} directionnel, $\mathscr N_{(T,D)}(0):=\lim_{r\to0^+}\mathscr N_{(T,D)}(r)$, de $T$ en 0 par rapport \`a $D$ suivant la direction de $\C^n$. On d\'efinit alors l'\textit{ordre directionnel} de $T$ par rapport \`a $D$, suivant la direction de $\C^n$, par:
    $$\varrho_{(T,D)}=\limsup_{r\to+\infty}\frac{\log \mathscr N_{(T,D)}(r)}{\log r}.$$
    
    \begin{prop}\label{prop1}
        Soient $T$ un courant positif plurisousharmonique  de bidegr\'e $(k,k)$ sur $\C^N=\C^n\times \C^m$ o\`u $k<n$ et $D$ un bor\'elien relativement compact de $\C^m$. Si $T$ est d'ordre $\varrho$ fini  alors $T$ est d'ordre directionnel $\varrho_{(T,D)}$ fini qui v\'erifie $\varrho_{(T,D)}\leq 2m+\varrho$.
    \end{prop}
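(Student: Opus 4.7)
Le plan est de majorer directement $\mathscr N_{(T,D)}(r)$ par $\nu_T$ évaluée en un rayon proportionnel à $r$. Comme $D$ est relativement compact dans $\C^m$, je commence par fixer $R>0$ tel que $D\subset \mathbb{B}_m(R)$. Pour $r\geq R$, l'inclusion géométrique élémentaire $\mathbb{B}_n(r)\times D\subset \mathbb{B}_n(r)\times \mathbb{B}_m(r)\subset \mathbb{B}_N(r\sqrt{2})$ (avec $N=n+m$) permet de remplacer l'intégrale sur le polydisque par une intégrale sur une boule euclidienne.

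L'idée clé est ensuite de décomposer la forme de Kähler $\beta=\beta_z+\beta_t$ de $\C^N$ et d'appliquer la formule du binôme. En tenant compte de $\beta_z^j=0$ pour $j>n$ et $\beta_t^j=0$ pour $j>m$, on obtient
$$\beta^{n+m-k}=\sum_{j=n-k}^{n}\binom{n+m-k}{j}\beta_z^{j}\wedge\beta_t^{n+m-k-j}.$$
Le terme d'indice $j=n-k$ vaut $\binom{n+m-k}{n-k}\beta_z^{n-k}\wedge\beta_t^m$. Chaque terme du développement étant une forme fortement positive, la positivité de $T$ entraîne la majoration ponctuelle de mesures positives
$$T\wedge\beta_z^{n-k}\wedge\beta_t^m\leq \frac{1}{\binom{n+m-k}{n-k}}\,T\wedge\beta^{n+m-k}.$$

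En intégrant cette inégalité sur $\mathbb{B}_n(r)\times D\subset\mathbb{B}_N(r\sqrt 2)$ puis en divisant par $r^{2(n-k)}$, je fais apparaître la fonction de Lelong de $T$ (rappelons que $T$ est de bidimension $n+m-k$) : il existe une constante $C>0$, ne dépendant que de $n,m,k$, telle que pour tout $r\geq R$,
$$\mathscr N_{(T,D)}(r)\leq \frac{2^{n+m-k}}{\binom{n+m-k}{n-k}}\cdot\frac{(r\sqrt 2)^{2(n+m-k)}}{r^{2(n-k)}}\cdot\nu_T(r\sqrt 2)= C\, r^{2m}\,\nu_T(r\sqrt 2).$$
Il reste à prendre le logarithme, à diviser par $\log r$ et à passer à la limite supérieure quand $r\to+\infty$ : comme $\log(r\sqrt 2)/\log r\to 1$, on obtient $\varrho_{(T,D)}\leq 2m+\varrho$, ce qui assure au passage la finitude de l'ordre directionnel.

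Le point le plus délicat sera la justification soigneuse de l'inégalité ponctuelle de formes, qui repose sur le fait que le produit de formes fortement positives reste fortement positif et que $T\wedge\omega$ est une mesure positive pour toute forme fortement positive $\omega$ de bidegré complémentaire. Aucun argument de régularisation n'est nécessaire ici puisque les formes $\beta_z$ et $\beta_t$ sont lisses; en revanche, il faudra bien distinguer les rôles des bidimensions et bidegrés dans les facteurs $r^{2(n-k)}$ et $r^{2(n+m-k)}$, qui sont à la source de la perte $2m$ dans la majoration finale.
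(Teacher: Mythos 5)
Votre démonstration est correcte et suit essentiellement la même démarche que celle de l'article : inclusion de $\mathbb{B}_n(r)\times D$ dans une boule euclidienne de rayon comparable à $r$ (le papier prend $\delta r$ avec $\delta>1$, vous prenez $r\sqrt2$), minoration de $(\beta_z+\beta_t)^{N-k}$ par $\beta_z^{n-k}\wedge\beta_t^m$ via la positivité de $T$, puis passage au logarithme et à la limite supérieure. Vous explicitez même le coefficient binomial que le papier laisse implicite; seule la constante $\frac{2^{n+m-k}}{\binom{n+m-k}{n-k}}$ de votre avant-dernière inégalité comporte un facteur $2^{n+m-k}$ redondant, sans incidence sur la conclusion.
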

    \begin{proof}
        Si $\delta>1$ alors il existe $r$ suffisamment grand tel que $\mathbb{B}_n(r)\times D\subset \mathbb{B}_N(\delta r)$.
        $$\begin{array}{lcl}
            \ds(\delta r)^{2m}\nu_T(\delta r) & = &\ds\frac1{(\delta r)^{2(n-k)}}\int_{\mathbb{B}_N(\delta r)}T\w(\beta_z+\beta_t)^{N-k} \\
            & \geq &\ds \frac1{(\delta r)^{2(n-k)}}\int_{\mathbb{B}_n(r)\times D}T\w\beta_z^{n-k}\w\beta_t^m\\
            & \geq& \ds\delta^{-2(n-k)}\mathscr N_{(T,D)}(r).
        \end{array}$$
        Donc $\ds\frac{\log \left[(\delta r)^{2m}\nu_T(\delta r)\right]}{\log r}\geq \frac{\log (\delta^{-2(n-k)}\mathscr N_{(T,D)}(r))}{\log r}$ et en passant \`a la limite sup\'erieure quand $r$ tend vers $+\infty$, on obtient  $2m+\varrho\geq \varrho_{(T,D)}$ et la proposition est prouv\'ee.
    \end{proof}

    Dans la suite on s'int\'eresse \`a la r\'eciproque, \`a savoir la question suivante~: si $T$ est d'ordre directionnel fini, a-t-on que $T$ est d'ordre fini?
    Une r\'eponse partielle positive est donn\'ee  par le th\'eor\`eme \ref{th3}; pour le citer on a besoin de quelques notions: Pour $B$ un bor\'elien relativement compact de $\C^n$, on d\'efinit de la m\^eme mani\`ere l'ordre directionnel de $T$ par rapport \`a $B$ suivant la direction de $\C^m$ comme \'etant $\varrho_{(B,T)}:=\limsup_{r\to+\infty}\frac{\log \mathscr M_{(B,T)}(r)}{\log r}$ o\`u $\mathscr M_{(B,T)}(r)=\frac1{r^{2(m-k)}}\int_{B\times \mathbb{B}_m(r)}T\w \beta_z^n\w\beta_t^{m-k}.$ Le lemme suivant sera utile pour la suite, et sa d\'emonstration  est analogue \`a celle du lemme \ref{lem2}.
    \begin{lem}\label{lem6}
        Soient $S$ un courant positif de $dd^c-$n\'egatif de bidegr\'e $(k,k)$  sur $\C^N=\C^n\times \C^m$ o\`u $k<n$ et $D$ un bor\'elien relativement compact de $\C^m$. Soit $f$ une fonction psh, $f\geq -1$,  de classe $\mathcal C^2$ sur un ouvert $\mathcal O$ de $\C^n$ telle que $\mathcal O'=\{z\in \mathcal O;\ f(z)<0\}$ soit relativement compact dans $\mathcal O$. Soit $K$ un compact de $\mathcal O'$, on pose $c_K=-\sup_{z\in K}f(z)$.\\
        Alors pour tout entier $1\leq s\leq n-k$ et pour toute fonction $g$ psh de classe $\mathcal C^2$ sur $\mathcal O'$ v\'erifiant $-1\leq g<0$ on a:
        $$\int_{K\times D} S\w(dd^cg)^{n-k}\w\beta_t^m\leq c_K^{-s}\int_{\mathcal O'\times D}S\w(dd^cf)^s\w(dd^cg)^{p-s}\w\beta_t^m.$$
    \end{lem}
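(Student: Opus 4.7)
Le plan est de transposer l'argument du Lemme \ref{lem2} au cadre produit $\C^n\times\C^m$. L'observation cruciale est que $\beta_t^m$ est une $(m,m)$-forme positive ferm\'ee ne d\'ependant que de $t$, tandis que $f$ et $g$ ne d\'ependent que de $z\in\C^n$~; en cons\'equence $\beta_t^m$ commute avec toutes les op\'erations diff\'erentielles et les int\'egrations par parties effectu\'ees en $z$, et l'int\'egration sur le bor\'elien relativement compact $D\subset\C^m$ peut \^etre trait\'ee comme un simple passage \`a la Fubini. Le m\^eme calcul que dans la d\'emonstration du Lemme \ref{lem2}, appliqu\'e \`a $S$ test\'e contre des formes du type $\varphi(z)\,\1_D(t)\,\beta_t^m$, fournira l'estimation recherch\'ee.

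Je proc\'ederais par r\'ecurrence sur $s$. Pour l'initialisation $s=1$, la minoration $-f\geq c_K$ sur $K$ et l'\'elargissement du domaine d'int\'egration de $K\times D$ \`a $\mathcal{O}'\times D$ donnent
\begin{equation*}
\int_{K\times D}S\wedge(dd^cg)^{n-k}\wedge\beta_t^m \;\leq\; c_K^{-1}\int_{\mathcal{O}'\times D}(-f)\,S\wedge(dd^cg)^{n-k}\wedge\beta_t^m.
\end{equation*}
Le transfert d'un facteur $dd^c$ de $g$ vers $f$ se fait ensuite par la formule de Stokes sur $\mathcal{O}'\times D$~: le terme de bord en $z$ s'annule car $f=0$ sur $\partial\mathcal{O}'$, et celui en $t$ est nul par fermeture de $\beta_t^m$. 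Les \'eventuels restes distributionnels mettant en jeu $dd^cS$ ont le signe favorable gr\^ace \`a l'hypoth\`ese $-dd^cS\geq 0$ combin\'ee \`a $-g\geq 0$, et peuvent \^etre omis pour aboutir \`a une majoration. L'\'etape inductive applique ce m\^eme transfert au sein de l'int\'egrale $\int_{\mathcal{O}'\times D}S\wedge(dd^cf)^{s-1}\wedge(dd^cg)^{n-k-s+1}\wedge\beta_t^m$, la condition $s\leq n-k$ garantissant la pr\'esence d'un facteur $dd^cg$ \`a \'echanger \`a chaque \'etape.

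L'obstacle principal sera la justification rigoureuse de ces manipulations lorsque $S$ est seulement un courant positif de $dd^c$-n\'egatif et que $f,g$ ne sont que $\mathcal{C}^2$. Je le contournerais par r\'egularisation~: on approxime $S$ par la convolution $S_\varepsilon=S*\chi_\varepsilon$ avec un noyau radial, ce qui pr\'eserve positivit\'e et $dd^c$-n\'egativit\'e~; pour $S_\varepsilon$ lisse, l'in\'egalit\'e voulue d\'ecoule d'une int\'egration par parties ponctuelle classique, puis on passe \`a la limite $\varepsilon\to 0$ en invoquant la convergence faible $S_\varepsilon\to S$ et la continuit\'e du produit $S\wedge(dd^cf)^s\wedge(dd^cg)^{n-k-s}\wedge\beta_t^m$ en les poids $\mathcal{C}^2$, suivant la m\'ethode de \textsc{Dabbek-Elkhadhra-El Mir} sous-jacente au Lemme \ref{lem2}.
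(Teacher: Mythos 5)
Votre strat\'egie co\"{\i}ncide avec celle du papier, qui du reste ne d\'emontre pas ce lemme~: il se borne \`a affirmer que la preuve est analogue \`a celle du lemme \ref{lem2}, lui-m\^eme cit\'e de \cite{Da-Elk-El} sans d\'emonstration, et votre plan (majoration de l'indicatrice de $K$ par $-f/c_K$, transfert it\'eratif d'un facteur $dd^c$ de $g$ vers $f$, r\'egularisation, le tout tensoris\'e par $\1_D(t)\,\beta_t^m$ qui ne perturbe pas les int\'egrations par parties faites en $z$) est pr\'ecis\'ement cette analogie. Une seule r\'eserve~: les restes d'int\'egration par parties font a priori intervenir $dS$ et $d^cS$ et non le seul $dd^cS$~; pour un courant positif non ferm\'e ces termes n'ont aucun signe, et il faut donc, comme dans \cite{Da-Elk-El}, organiser le calcul de sorte que seuls $\langle S,\,dd^c(\cdot)\rangle$ et $\langle dd^cS,\,\cdot\rangle$ apparaissent --- la r\'egularisation $S*\chi_\varepsilon$ ne suffit pas \`a elle seule \`a donner le signe favorable que vous invoquez.
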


    \begin{theo}\label{th3}
        Soit $T$ un courant positif pluriharmonique  de bidegr\'e $(1,1)$ sur $\C^N=\C^n\times \C^m$ o\`u $n,\ m>1$. On suppose qu'il existe deux compacts $D$ et $D'$ d'int\'erieurs non vide  de $\C^n$ et $\C^m$ respectivement tels que $T$ soit d'ordres finis dans les directions de $\C^m$ et $\C^n$  par rapport \`a $D$ et $D'$, alors $T$ est d'ordre fini.
    \end{theo}

    \begin{proof}
        Pour $r>0$ on a $\mathbb{B}_N(r)\subset K_r:=\mathbb{B}_n(r)\times \mathbb{B}_m(r)$, donc
        \begin{equation}\label{eq 3.2}
            \begin{array}{lcl}
                \nu_T(r) & = &\ds \frac1{r^{2(n+m-1)}}\int_{\mathbb{B}_N(r)}T\w(\beta_z+\beta_t)^{n+m-1} \\
                & \leq & \ds \frac1{r^{2(n+m-1)}}\int_{K_r} T\w(\beta_z+\beta_t)^{n+m-1}
            \end{array}
        \end{equation}
        Comme $D$ est un compact d'int\'erieur non vide de $\C^n$, il existe une fonction psh $u$ (la fonction extr\'emale de \textsc{Siciak} associ\'e \`a $D$ ) de classe $\mathcal C^2$ sur $\C^n$ telle que la mesure $(dd^cu)^n$ soit \`a support dans $D$. De m\^eme il existe une fonction $v$ psh de classe $\mathcal C^2$ sur $\C^m$ telle que la mesure $(dd^cv)^m$ soit port\'ee par $D'$. De plus elles v\'erifient $\max(\log|z|,-1)\leq u(z)\leq \log|z|+A$ pour tout $z\in\C^n$ et $\max(\log|t|,-1)\leq v(t)\leq \log|t|+C$ pour tout $t\in\C^m$  o\`u $A$ et $C$ sont deux constantes. Consid\'erons $\epsilon>0$ et la fonction
        $$w(z,t)=\left(\frac{u(z)-A}{\log(1+2r)}-1\right)+\left(\frac{v(t)-C}{\log(1+2r)}-1\right). $$ Alors $w$ est une fonction psh de classe $\mathcal C^2$ sur $\C^N$ qui v\'erifie, pour $r$ assez grand, $w>-\epsilon$ sur le bord de $K_{2r}$. Donc l'ensemble ${\mathcal O}:=\{(z,t)\in\C^N;\ w(z,t)+2\epsilon<0\}$ est relativement compact dans  $K_{2r}$. De plus $K_r\subset\subset{\mathcal O}$. Soient $$c_r=-\sup_{(z,t)\in \overline{K_r}}w(z,t)=O\left(\frac1{\log(1+2r)}\right)\hbox{ et }g(z,t) =\frac{|z|^2+|t|^2-4r^2}{4r^2}.$$ D'apr\`es le lemme \ref{lem2},
        $$\begin{array}{lcl}
            \ds\int_{K_r} T\w(dd^cg)^{n+m-1}& \leq &\ds c_r^{-(n+m-1)}\int_{\mathcal O}T\w(dd^cw)^{n+m-1} \\
            & \leq &\ds c_r^{-(n+m-1)}\int_{K_{2r}}T\w(dd^cw)^{n+m-1}
        \end{array}$$
        Donc
        \begin{equation}\label{eq 3.3}
            \frac1{r^{2(n+m-1)}}\int_{K_r}T\w(\beta_z+\beta_t)^{n+m-1}\leq \theta(r)\Theta_T(2r) .
        \end{equation}
        o\`u $\ds\theta(r)=\left(\frac4{c_r\log(1+2r)}\right)^{n+m-1}$ et $\ds\Theta_T(2r)= \int_{K_{2r}} T\w(dd^c(u+v))^{n+m-1}$. Comme $c_r\log(1+2r)$ est born\'ee ind\'ependamment de $r$, il existe $a>0$ tel que
        $\theta(r)\leq a$ pour tout $r>0$ (suffisamment grand). Par raison de degr\'e,

        \begin{equation}\label{eq 3.4}
            \begin{array}{lcl}
                \Theta_T(2r)& =& \ds\int_{K_{2r}}T\w(dd^c(u+v))^{n+m-1}\\
                & = &\ds C_{N-1}^m\int_{K_{2r}}T\w(dd^cu)^{n-1}\w(dd^cv)^m+C_{N-1}^n \int_{K_{2r}} T\w(dd^cu)^n\w\\
                & & \hfill\w(dd^cv)^{m-1}\\
                &= &\ds C_{N-1}^m\int_{\mathbb{B}_n(2r)\times D'}T\w(dd^cu)^{n-1}\w(dd^cv)^m+\\
                & &\ds \hfill+ C_{N-1}^n \int_{D\times \mathbb{B}_m(2r)}T\w(dd^cu)^n\w(dd^cv)^{m-1}\\
                &\leq &\ds b_1C_{N-1}^m\int_{\mathbb{B}_n(2r)\times D'}T\w(dd^cu)^{n-1}\w\beta_t^m+\\
                & & \ds \hfill+b_2 C_{N-1}^n \int_{D\times \mathbb{B}_m(2r)}T\w\beta_z^n\w(dd^cv)^{m-1}
            \end{array}
        \end{equation}
        o\`u $b_1$ et $b_2$ sont deux constantes positives qui d\'ependent uniquement de $D'$ et $D$ respectivement.\\
        Soient $f(z)=\frac{|z|^2-(3r)^2}{(3r)^2}$ et $g_2(z)=\frac{u(z)-A-\log(3r)}{\log(\kappa r)}$ o\`u $\kappa:=1+3e^{1+A}$, $f$ est psh sur $\C^n$, $-1<f(z)<0$ sur $\mathbb{B}_n(3r)$ et $-1\leq g_2(z)<0$ sur $\mathbb{B}_n(3r)$. Le lemme \ref{lem6} appliqu\'e \`a $f,\ g$ et $K=\overline{\mathbb B}_n(2r)$ donne

        $$\ds\int_{ \mathbb{B}_n(2r)\times D'}T\w(dd^cg_2)^{n-1}\w\beta_t^m\leq \ds \left(\frac95\right)^{n-1}\int_{ \mathbb{B}_n(3r)\times D'}T\w(dd^cf)^{n-1}\w\beta_t^m $$
        on obtient donc
        $$\int_{ \mathbb{B}_n(2r)\times D'}T\w(dd^cu)^{n-1}\w\beta_t^m\leq \left(\frac95\log(\kappa r)\right)^{n-1}\mathscr N_{(T,D')}(3r). $$
        De la m\^eme fa\c{c}on on d\'emontre que
        $$\int_{D\times \mathbb{B}_m(2r)}T\w\beta_z^n\w(dd^cv)^{m-1}\leq \left(\frac95\log(\kappa r)\right)^{m-1}\mathscr M_{(D,T)}(3r).$$
        L'in\'egalit\'e (\ref{eq 3.4}) donne
        \begin{equation}\label{eq 3.5}
            \begin{array}{lcl}
                \ds\int_{K_{2r}}T\w(dd^c(u+v))^{n+m-1} & \leq &\ds b_1C_{N-1}^m  \left(\frac95\log(\kappa r)\right)^{m-1}\mathscr M_{(D,T)}(3r)+\\
                & & \ds+ b_2C_{N-1}^n \left(\frac95\log(\kappa r)\right)^{n-1}\mathscr N_{(T,D')}(3r).
            \end{array}
        \end{equation}
        D'apr\`es les in\'egalit\'es (\ref{eq 3.2}), (\ref{eq 3.3}) et (\ref{eq 3.5}), on d\'eduit que
        $$\nu_T(r)\leq c_1(\log(\kappa r))^{n-1}\mathscr N_{(T,D')}(3r)+c_2(\log(\kappa r))^{m-1}\mathscr M_{(D,T)}(3r)$$
        o\`u $c_1, c_2$ sont deux constantes positives. Un calcul simple montre alors que
        $$\frac{\log(\nu_T(r))}{\log r}\leq \frac{\tau+N\log\log(\kappa r)}{\log r}+\max\left(\frac{\log(\mathscr N_{(T,D)}(3r))}{\log r},\frac{\log(\mathscr M_{(D',T)}(3r))}{\log r}\right)$$
        et par  passage \`a la limite sup\'erieure quand $r$ tend vers $+\infty$, on trouve
        $\varrho\leq \max(\varrho_{(T,D)},\varrho_{(D',T)})$.
    \end{proof}
\section*{Remerciements}
    Nous remercions vivement les professeurs \textsc{Jean-Pierre Demailly, Hassine El Mir} et \textsc{H\`edi Ben Messaoud} pour d'utiles conversations \`a propos ce travail.

\end{document}